\theoremstyle{plain}
\newtheorem{theorem}{Theorem}[section]
\newtheorem{lemma}[theorem]{Lemma}
\theoremstyle{definition}
\newtheorem{remark}[theorem]{Remark}
\newcommand{\R}{\mathbb{R}}
\newcommand{\C}{\mathbb{C}}
\newcommand{\Z}{\mathbb{Z}}
\newcommand{\cI}{{\cal I}}
\newcommand{\eps}{\varepsilon}
\newcommand{\bx}{\bar{x}}
\newcommand{\cX}{\mathcal{X}}
\newcommand{\bydef}{\,\stackrel{\mbox{\tiny\textnormal{\raisebox{0ex}[0ex][0ex]{def}}}}{=}\,}
\newcommand{\tx}{\tilde{x}}
\newcommand{\Conj}{{\rm conj}}
\newcommand{\vect}[1]{{\boldsymbol{#1}}}
\newcommand{\bfFm}{{\vect{F}_{\!\vect{m}}}}
\newcommand{\bfIm}{{\vect{I}_{\!\vect{m}}}}
\newcommand{\bfk}{{\vect{k}}}
\newcommand{\bfs}{{\vect{s}}}
\newcommand{\bfq}{{\vect{q}}}
\newcommand{\bfp}{{\vect{p}}}
\newcommand{\bfm}{{\vect{m}}}
\newcommand{\bfn}{{\vect{n}}}
\newcommand{\bfl}{{\boldsymbol \ell }}
\newcommand{\bfj}{{\boldsymbol j }}
\title{Rigorous numerics for ill-posed PDEs: \\ periodic orbits in the Boussinesq equation}
\author{
Roberto Castelli\thanks{VU University Amsterdam, De Boelelaan 1081, 1081 HV Amsterdam,
The Netherlands (e-mail address: {\tt r.castelli@vu.nl}).}
\and Marcio Gameiro\thanks{Instituto de Ci\^{e}ncias Matem\'{a}ticas e de Computa\c{c}\~{a}o,
Universidade de S\~{a}o Paulo, Caixa Postal 668, 13560-970,
S\~{a}o Carlos, SP, Brazil (e-mail address: {\tt gameiro@icmc.usp.br}).
%This author was partially supported by FAPESP grants 2013/07460-7 and 2013/50382-7, and by CNPq grant 305860/2013-5, Brazil.
}
\and Jean-Philippe Lessard\thanks {
Universit\'e Laval, D\'epartement de Math\'ematiques et de Statistique, 1045 avenue de la
M\'edecine, Qu\'ebec, (Qu\'ebec), G1V 0A6, CANADA (e-mail address: {\tt jean-philippe.lessard@mat.ulaval.ca}).
%This author was partially supported by an NSERC Discovery Grant and by the FAPESP-CALDO grant 2013/50382-7.
}}
\date{}
\begin{document}

\maketitle 

\begin{abstract}
In this paper, we develop computer-assisted techniques for the analysis of periodic orbits of ill-posed
partial differential equations. As a case study, our proposed method is applied to the Boussinesq equation,
which has been investigated extensively because of its role in the theory of shallow water waves. The
idea is to use the symmetry of the solutions and a Newton-Kantorovich type argument (the radii polynomial
approach), to obtain rigorous proofs of existence of the periodic orbits in a weighted $\ell^1$ Banach
space of space-time Fourier coefficients with geometric decay. We present several computer-assisted
proofs of existence of periodic orbits at different parameter values.
\end{abstract}

\begin{center}
{\bf \small Keywords} \\ \vspace{.05cm}
{ \small Ill-posed PDE $\cdot$ periodic orbits $\cdot$ contraction mapping \\
Boussinesq equation $\cdot$ rigorous computations $\cdot$ interval analysis}
\end{center}

\begin{center}
{\bf \small Mathematics Subject Classification (2010)} \\ \vspace{.05cm}
{ \small 35R20 $\cdot$ 47J06 $\cdot$  65G40 $\cdot$ 65H20 $\cdot$ 35B10}
\end{center}

\section{Introduction}

Studying infinite dimensional nonlinear dynamical systems in the form of dissipative partial differential equations (PDEs), delay differential equations (DDEs) and infinite dimensional maps poses several difficulties. An issue is that it not possible in general to explore the dynamics in the entire infinite dimensional phase space. One common approach to circumvent this central difficulty is to focus on a set of {\em special} bounded solutions (e.g. fixed points, periodic orbits, connecting orbits between those) acting as organizing centers for the dynamics. Unfortunately, the nonlinearities in the models obstruct the analysis, and proving the existence of special solutions using standard {\em pen and paper} techniques may be an impossible task. In an effort to overcome these difficulties, the strengths of functional analysis, topology, algebraic topology (Conley index theory), numerical analysis, nonlinear analysis and scientific computing have recently been combined, giving rise to novel computer-assisted approaches to study infinite dimensional nonlinear problems. A growing literature on computational methods (functional analytic and topological)  is providing mathematically rigorous proofs of existence of special bounded solutions for PDEs \cite{MR1838755, MR2049869,MR2788972,MR2470145, MR1699017,MR2136516,MR2441958, MR2728184,MR2679365,MR2776917, MR3077902,MR2718657,jf_jb_OK,MR2019251, MR2220064,MR3353132,MR2821596, MR2378291,MR1849323,MR2652784,Castelli_Teismann}, DDEs \cite{MR2592879,MR2871794,minamoto} and infinite dimensional maps \cite{MR3124898,MR2067140, jay_CO_Kot_Shaffer}. Besides these recent successes in dynamical systems, ill-posed equations and problems with indefinite tails (e.g. strongly indefinite problems) have not received much attention in the field of rigorous computing. This is perhaps not surprising, as ill-posed equations do not naturally lead to the notion of a dynamical system. On the other hand, the idea of studying strongly indefinite problems by looking only for special type of bounded solutions (which exist for all time) has been commonplace for a while. Just think for instance of the theory of Floer homology which was originally developed to solve Arnold's conjecture\footnote{Arnold's conjecture states that the number of periodic solutions of a periodic Hamiltonian system is bounded from below by the topological invariants of the manifold on which the Hamiltonian system is defined.} (e.g. see \cite{MR0193645,MR987770,MR1045282}). One of the fundamental idea used by Floer in constructing its homology was precisely to restrict its attention to the set of bounded solutions. 

In the present paper we study ill-posed equations using a similar idea, that is we restrict our study to the space of periodic orbits. In this space, the solutions exist for all time, they are bounded and they are more regular than a solution of a typical initial value problem. This has the tremendous advantage of not having to care about the ill-posedness of the equation and about the ambient state-space. Using this point of view, we propose a computer-assisted, functional analytic approach to prove existence of periodic orbits in the ill-posed Boussinesq equation

\begin{align} \label{eq:boussinesq}
& u_{tt}= u_{yy} + \lambda u_{yyyy} +(u^2)_{yy}, \quad \lambda>0 \\
& u=u(t,y) \in \R, ~ y \in [0,1], ~t \in \mathbb{R},
\nonumber
\end{align}
which arises in the theory of shallow water waves. Equation \eqref{eq:boussinesq} is often called the {\em bad} Boussinesq equation, essentially because, as already mentioned in \cite{MR2538946}, it is not well-posed in any reasonable space, and one can find analytic initial conditions for which the solution is not defined (in almost any weak sense) on any interval of time (e.g. see \cite{MR795808,MR668408}). 

Let us mention that we are not the first to use the tools of rigorous computing to study ill-posed problems. In \cite{piotr_boussinesq}, the authors use the method of self-consistent bounds (e.g. see \cite{MR1838755,MR2049869,MR2788972}) to prove existence of periodic solutions of a Boussinesq-type equation perturbed by a time-dependent forcing term. However, the method of \cite{piotr_boussinesq} and our approach are quite different. First, we do not prove the existence of solutions which are obtained as perturbations of stationary points. Second, we prove existence of periodic orbits for the autonomous ill-posed system without a forcing term. Third, as we are only aiming at obtaining some particular bounded solutions (in this case periodic orbits), our functional analytic approach circumvents the ill-posedness of the Boussinesq equation: we do not need that the evolution is defined nor well-posed and we avoid doing a rigorous integration of the equation as in \cite{MR2049869,MR2788972,MR2679365}.

Before proceeding with the presentation of the method, let us sketch our general strategy for finding a periodic orbit of an ill-posed problem via a computer-assisted proof. Our approach is a natural extension of previous computer-assisted methods to study PDEs \cite{dlLFGL,time_periodic_PDEs, MR2338393,MR2718657,MR2776917}, and the following presentation closely follows the exposition in \cite{BL_notices}. We look for a periodic orbit, which we denote by $x$, and we introduce an equivalent formulation as one of the form $f(x)=0$. We look for solutions of this problem in a Banach space $(X,\| \cdot \|_X)$ which is a weighted $\ell^1$ space of space-time Fourier coefficients with geometric decay. We begin with a numerically obtained approximation $\bx$ having that $f(\bx) \approx 0$. Instead of solving $f(x)=0$ directly, we define a nonlinear operator $T$ whose fixed points are the zeros of $f$. The mapping $T$ is a Newton-like operator of the form $T(x) = x - A f(x)$, where the linear operator $A$ is chosen as an injective approximate inverse of $Df(\bx)$. We then show that $T$ is a contraction mapping on a closed ball $B_{r}(\bx) \subset X$ of radius $r>0$ and centered at $\bx$. To verify that $T$ is a contraction, we use a Newton-Kantorovich type argument (the radii polynomial approach), which provides an efficient way of obtaining a ball $B_{r}(\bx)$ on which $T$ is a contraction. The contraction mapping theorem yields the existence of a unique $\tx \in B_{r}(\bx)$ such that $T(\tx)=\tx$. The fixed-point $\tx$ corresponds to the wanted periodic orbit and since it belongs to $B_{r}(\bx)$, a rigorous error bound of the form $\|\tx-\bx\|_X \le r$ is obtained. %One advantage of this approach is that verifying contractivity involves \emph{inequalities} only, as oppose to prove \emph{equalities} in the formulation $f(x)=0$. In the context of computer-assisted proofs, inequalities allow the control of errors from rounding, truncation, discretization and uncertainties in parameter values.

The paper is organized as follows. In Section~\ref{sec:setup_symmetries}, we study the symmetries of the periodic  solutions of the Boussinesq equation \eqref{eq:boussinesq} and we derive an underdetermined system of the form $h(x)=0$, where $x$ corresponds to a periodic orbit. We then study some conserved quantities of \eqref{eq:boussinesq} which are used to fix the underdeterminacy of the system. Fixing one of the conserved quantities (the energy), we numerically compute periodic orbits. In Section~\ref{sec:rigorous_computation}, we modify the system $h=0$ into a simpler one of the form $f(x)=0$, where the energy is no longer fixed, but instead we fix the average of the solution. This choice simplifies the analysis of the computer-assisted proofs. Then, we introduce the radii polynomial approach to prove existence of periodic orbits close to numerical approximations, and all the necessary analytic bounds are derived. Finally, in Section~\ref{sec:results}, we present several computer-assisted proofs of existence of periodic orbits of \eqref{eq:boussinesq} at different parameter values $\lambda>0$.

\section{Set-up using the symmetry of the solutions} \label{sec:setup_symmetries}

For different values of $L$, we look for solutions of \eqref{eq:boussinesq} that are $\frac{2\pi}{L}$-periodic in time and $1$-periodic in space, that is
$u(t+\frac{2 \pi}{L}, y) = u(t, y)$ and  $u(t, y) = u(t, y+1)$ for all $y \in [0,1]$ and $t \in \mathbb{R}$. Moreover we assume that the solution satisfies an even/odd symmetry both in time and space. As mentioned in \cite{MR2538946}, the space of spatially symmetric solutions is invariant. Therefore we plan on studying the Boussinesq equation \eqref{eq:boussinesq} supplemented with the even periodic boundary conditions $u(t, -y) = u(t, y)$. Moreover, we will impose yet another symmetry in time to simplify the search. As the next remark demonstrates, only a specific type of space-time symmetry may exist. 

\begin{remark} \label{rem:even/even} % {\bf(Only solutions with even/even symmetries may exist)}
Among all possible combination even-odd/even-odd symmetries, non trivial solutions exist only in the case even/even. 
For example, by an odd/even symmetry, we mean a solution $u(t,y)$ satisfying $u(-t,y)=-u(t,y)$ and $u(t,-y)=u(t,y)$ for all $y \in [0,1]$ and $t \in \mathbb{R}$.
If $u(t,y)$ is symmetric both in time and space, whatever the symmetry is, the product $u^2(t,y)$ is even both in time and space.  The second derivative $(u^2)_{yy}$ preserves the even symmetry in space and clearly the same symmetry in time. Hence   $(u^2)_{yy}$ is necessarily even-even. All the $\partial _{tt}$, $\partial _{yy}$
and $\partial _{yyyy}$ preserve the symmetry of the function they apply to, therefore the only possibility for the solution $u(t,y)$ to be symmetric is that it is even/even.
 \end{remark}
 
The space-time periodic solutions of \eqref{eq:boussinesq} can be expanded using the Fourier expansion 
\begin{equation} \label{eq:space-time-expansion}
u(t,y) = \sum_{\bfk \in \mathbb{Z}^2} c_\bfk \psi_\bfk(t,y),\quad  \text{where  }  \psi_\bfk(t,y) \bydef e^{i L k_1t}e^{i 2 \pi k_2 y}.
\end{equation}
Following Remark~\ref{rem:even/even}, we look for periodic solutions $u(t,y)$ of \eqref{eq:boussinesq} satisfying the even/even symmetries
\[
u(t,-y)=u(t,y)
\qquad \text{and} \qquad
u(-t,y)=u(t,y).
\]

Since we are interested in real solutions and because of the symmetries, the Fourier coefficients $c_\bfk$ in \eqref{eq:space-time-expansion} satisfy the relations
\begin{equation} \label{eq:relations_c_k_1}
\begin{array}{lll}
c_{-k_1, -k_2} = \Conj(c_\bfk) \\
c_{k_1, -k_2} = c_\bfk \\
c_{-k_1, k_2} = c_\bfk,
\end{array}
\end{equation}
where given a complex number $z = a + i b \in \C$, $\Conj(z) = a - i b$ denotes the complex conjugate.
From \eqref{eq:relations_c_k_1} we get that $\Conj(c_\bfk)=c_\bfk$, which implies that $c_\bfk \in \R$, and also we get that
\begin{equation} \label{eq:symmetry}
c_{\pm k_1, \pm k_2} = c_\bfk.
\end{equation}

Clearly, the even/even  solution $u(t,y)$ can be expanded on the basis of cosine with the index $ \bfk$ ranging in $\mathbb N^2$. However, it is convenient to keep the expansion \eqref{eq:space-time-expansion} and the constraints \eqref{eq:symmetry}. In this setting the  product $u^2(t,y)$ can be easily expanded as
\begin{equation} \label{eq:convolution}
u^2(t,y)=\sum_{\bfk\in \Z^2}(c^2)_\bfk\psi_\bfk(t,y),\quad  \text{where  }
(c^2)_\bfk \bydef \sum_{\bfl+\bfj = \bfk} c_{\bfl} c_{\bfj}.
\end{equation}
The Boussinesq equation \eqref{eq:boussinesq} can be re-written as $u_{tt} - \left( u +\lambda u_{yy} +u^2 \right)_{yy} = 0$. Note that 
\[
u +\lambda u_{yy} +u^2 =  \sum_{\bfk \in \mathbb{Z}^2} \left(  [1-\lambda k_2^2(2 \pi)^2] c_\bfk + (c^2)_\bfk \right) \psi_\bfk.
\]
Hence, the Fourier coefficients of the expansion of $u$ given by \eqref{eq:space-time-expansion} plugged in the Boussinesq equation are
\begin{equation}\label{eq:hk}
h_\bfk \bydef k_1^2 L^2 c_\bfk - k_2^2 (2 \pi)^2 \left(  [1-\lambda k_2^2(2 \pi)^2] c_\bfk + (c^2)_\bfk \right) = \eta_\bfk c_\bfk - 4 \pi^2 k_2^2 (c^2)_\bfk,
\end{equation}
where
\[
\eta_\bfk =\mu_\bfk(L,\lambda) \bydef k_1^2 L^2 +  16 \pi^4 \lambda k_2^4 - 4 \pi^2 k_2^2.
\]
Looking for periodic solutions of \eqref{eq:boussinesq} that are $\frac{2\pi}{L}$-periodic in time and $1$-periodic in space 
is equivalent to solve $h_\bfk=0$ for any $\bfk \in \Z^2$. From conditions \eqref{eq:symmetry} it is straightforward to verify that   
\begin{equation} \label{eq:relations_h_k}
\begin{array}{lll}
h_{\pm k_1, \pm k_2} = h_\bfk.
\end{array}
\end{equation}
Hence, finding even/even $\frac{2\pi}{L}$-periodic in time,  and $1$-periodic in space periodic solutions of the Boussinesq equation is equivalent to looking for solutions of  $h_\bfk=0$ for all $\bfk=(k_1,k_2)$ with $k_1,k_2 \geq 0$ in the unknowns $\{c_\bfk\}_{\bfk\geq 0}$, subjected to the conditions \eqref{eq:symmetry}.

Looking at \eqref{eq:hk} we immediately realize that
$$
h_{0,0}\equiv 0,\quad h_{k_1,0}=k^2_1L^2c_{k_1,0},
$$
and so $c_{k_1,0}=0$ for any $k_1>0$. The first relation shows that the system $h=0$ is underdetermined. Therefore we will need either to add one more equation or to remove one of the unknowns. 
In order to numerically find some initial periodic orbits, we will use the conserved quantities of the Boussinesq equation to fix the fact that the system is underdetermined.

\subsection{Conserved quantities and integrals of motion}

Following \cite{MR957220} with the necessary adaptations, we see that the system has integral of motions. If $u(t,y)$ is a time $\frac{2\pi}{L}$-periodic and space $1$-periodic solution of \eqref{eq:boussinesq} then the following quantities
\[
J(t) \bydef \int_0^{1}u_t(t,y) dy
\quad \text{and} \quad
W(y) \bydef \int_0^{\frac{2\pi}{L}} \left( u(t,y) +\lambda u_{yy}(t,y) +[u(t,y)]^2 \right)_y dt
\]
are conserved, as we demonstrate next.

Let us first study the conserved quantity $J(t)$. Since $u$ is $1$-periodic in space, then
\[
\frac{d}{dt}J(t)  =\int_0^1u_{tt}dy=\int_0^{1} \left( u +\lambda u_{yy} +u^2 \right)_{yy}dy=0,
\]
which shows that $J(t)$ is conserved along the solutions of \eqref{eq:boussinesq}. Expanding
$$
J(t)=\int_0^{1}\sum_{\bfk \in \Z^2}c_\bfk k_1e^{ik_1Lt}e^{ik_22\pi y} dy=\sum_{\bfk\in \Z^2}c_\bfk k_1e^{ik_1Lt}\int_0^{1}e^{ik_22\pi y}dy=\sum_{k_1\in\Z}c_{k_1,0}k_1e^{ik_1Lt},
$$
and using that $J(t)$ is invariant in time, it follows that $c_{k_1,0}=0$ for any $k_1\neq 0$.
Similarly, we obtain the same conclusion about the $h_\bfk$, namely that  $h_{k_1,0}=0$ for any $k_1\neq 0$, as $h_{k_1,0}=0\Leftrightarrow k_1^2L^2c_{k_1,0}=0$.
This means that the integral of motion $J$ is encoded in the equation $h_{k_1,0}=0$ and results in $c_{k_1,0}=0$ for any $k_1>0$.
Moreover, note that $J=J(t)=0$ for all $t \in \R$. Indeed, assume that $J \ne 0$ and denote  $I(t)=\int_0^1 u dy$. Then $I'(t)=\int_0^1 u_t dy = J(t) = J \in \R$ and integrating leads to 
$I(t)=Jt+constant$. This implies that $\int_0^1 u dy \to \infty$ as $t$ grows. In other words, this means that the average of the periodic solution $u$ of \eqref{eq:boussinesq} is unbounded as $t$ grows, which contradicts the fact that the solution $u(t,y)$ to be bounded.

Let us now study the conserved quantity $W(y)$. Since $u$ is $\frac{2\pi}{L}$-periodic in time, then
\[
\frac{d}{dy}W(y) =\int_0^{\frac{2\pi}{L}}(u +\lambda u_{yy} +u^2)_{yy} dt=\int_0^{\frac{2\pi}{L}}u_{tt} dt=0,
\]
which shows that $W(y)$ is conserved along the solutions of \eqref{eq:boussinesq}. Proceeding as before, that is writing $W(y)$ in terms of the Fourier coefficients and assuming that $W$ does not depend on $y$, we end up with the relation $h_{0,k_2}=0$ for any $\bfk=(0,k_2)$ with $k_2\geq 0 $.

We now discuss the Hamiltonian structure of the system. Let us introduce the momentum
$$
v(t,y)\bydef u_t(t,0)+\int_0^yu_t(t,\xi)\, d\xi.
$$
The differential equation in \eqref{eq:boussinesq} is equivalent to the system
\[
\begin{cases}
u_t=v_y\\
v_t=\lambda u_{yyy}+u_y+(u^2)_y.
\end{cases}
\]
Indeed $u_{tt}=(v_y)_t=(v_t)_y=\lambda u_{yyyy}+u_{yy}+(u^2)_{yy}$.
Denote the {\em energy functional} by
$$
E(v,u)=T(v)+V(u)\bydef \int_0^1\frac{1}{2}v^2 dy+\int_0^1 \left( - \frac{\lambda}{2}(u_y)^2+\frac{1}{2}u^2+\frac{1}{3}u^3 \right) dy
$$
\begin{lemma}
If $u(t,y)$ is a space-time periodic solution of \eqref{eq:boussinesq} with the period as above, and $v(t,y)$ defined as above, then 
$$
\frac{d}{dt}E(v,u)=0.
$$
\end{lemma}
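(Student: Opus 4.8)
The plan is to differentiate $E(v,u)$ in time and show that every term cancels after integrating by parts, using the equations $u_t=v_y$ and $v_t=\lambda u_{yyy}+u_y+(u^2)_y$ together with the spatial periodicity of $u$ and $v$ in $y$. First I would write
\[
\frac{d}{dt}E(v,u)=\int_0^1 v\, v_t\, dy+\int_0^1\left(-\lambda u_y u_{yt}+u u_t+u^2 u_t\right)dy,
\]
and substitute $v_t=\lambda u_{yyy}+u_y+(u^2)_y$ into the first integral. In the second integral I would replace $u_{yt}=(u_t)_y=v_{yy}$ and $u_t=v_y$, so that
\[
\frac{d}{dt}E(v,u)=\int_0^1 v\left(\lambda u_{yyy}+u_y+(u^2)_y\right)dy+\int_0^1\left(-\lambda u_y v_{yy}+u v_y+u^2 v_y\right)dy.
\]

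Next I would integrate by parts in $y$ to match the two groups. In the term $-\lambda u_y v_{yy}$, integrating by parts once (the boundary term vanishes by $\frac{2\pi}{L}$-in-time, $1$-in-space periodicity, in particular $1$-periodicity in $y$) gives $+\lambda u_{yy}v_y$; integrating by parts again gives $-\lambda u_{yyy}v$, which cancels $\lambda v u_{yyy}$ from the first integral. For the remaining terms $\int_0^1 v(u_y+(u^2)_y)\,dy$ and $\int_0^1(u+u^2)v_y\,dy$, a single integration by parts on either one (again with vanishing boundary terms by periodicity) turns $\int_0^1 v(u+u^2)_y\,dy$ into $-\int_0^1(u+u^2)v_y\,dy$, cancelling the rest. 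Hence $\frac{d}{dt}E(v,u)=0$.

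The only genuinely delicate point — and the one I would state carefully — is the justification that all boundary terms at $y=0$ and $y=1$ vanish: this uses that $u(t,\cdot)$ and hence $v(t,\cdot)=u_t(t,0)+\int_0^y u_t(t,\xi)\,d\xi$ are $1$-periodic in $y$, the latter because $\int_0^1 u_t(t,\xi)\,d\xi=J(t)=0$ as established above (so $v(t,1)=v(t,0)$, and similarly for its $y$-derivatives, which are $y$-derivatives of the $1$-periodic function $u$). I would also note in passing that differentiation under the integral sign is legitimate here because we work with smooth (indeed analytic, by the geometric decay of the Fourier coefficients in $X$) space-time periodic solutions, so there is no regularity obstruction. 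Everything else is routine integration by parts, so I do not anticipate a real obstacle beyond bookkeeping.
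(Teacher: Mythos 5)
Your proof is correct and takes essentially the same route as the paper's: differentiate $E$ under the integral, substitute the first-order system $u_t=v_y$, $v_t=\lambda u_{yyy}+u_y+(u^2)_y$, and cancel everything by integration by parts in $y$ using periodicity (the paper merely orders the integrations by parts slightly differently, moving the derivative off $v$ in one step rather than off $u$ in two). Your explicit remark that the boundary terms require the $1$-periodicity of $v$ in $y$, which follows from $J(t)=0$, is a detail the paper leaves implicit.
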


\begin{proof}
Integration by parts leads to
\begin{align*}
\frac{d}{dt}E(v,u)&=\int_0^1vv_t\,  dy+\int_0^1 -\lambda u_yu_{yt}+uu_t+u^2u_t \, dy\\
&=\int_0^1v(\lambda u_{yyy}+u_y+(u^2)_y)\,  dy+\int_0^1 \lambda u_{yy}u_{t}+uu_t+u^2u_t \, dy\\
&=-\int_0^1v_y(\lambda u_{yy}+u+u^2)\,  dy+\int_0^1 u_t(\lambda u_{yy}+u+u^2) \, dy=0. \qedhere
\end{align*}
\end{proof}

We now rephrase the conservation of  the energy in terms of the Fourier coefficients of the solution $u(t,y)$. In the following we again assume that $u(t,y)$ is a space-time periodic solution and we also assume that $u(t,y)$ is even both in time and space. That is, we assume that the third relation of \eqref{eq:relations_c_k_1} holds.

Since the energy is conserved, we have that $E=E(t)=E(0)$. First compute
\begin{align*}
v(0,y)&=u_t(0,0)+\int_0^yu_t(0,\xi)\ d\xi\\
& =\sum_{\bfk\in\Z^2}c_\bfk ik_1L+\int_0^y \sum_{\bfk\in \Z^2}c_\bfk ik_1Le^{i2\pi k_2\xi}\, d\xi \\
& =\sum_{\bfk\in \Z^2}c_\bfk ik_1L\left(1+\int_0^y e^{i2\pi k_2\xi}\, d\xi\right).
\end{align*}
Since $c_{-k_1,k_2}=c_{k_1,k_2}$, the sum vanishes for any $y$. Therefore $v(0,y)=0$ for any $y$. 
This implies that term $T(v)$ in the energy vanishes, and so
$$
E=\int_0^1 \left( -\frac{\lambda}{2}(u_y(0,y))^2+\frac{1}{2}u(0,y)^2+\frac{1}{3}u(0,y)^3 \right) dy.
$$
Using \eqref{eq:convolution} and
\begin{align*}
u_y^2(0,y)&=-\sum_{\bfk\in \Z^2}\sum_{\bfl+\bfj=\bfk} (2 \pi \ell_2 c_{\bfl}) (2 \pi j_2 c_{\bfj}) e^{ik_2 2\pi y}
\\
u^2(0,y)&=\sum_{\bfk\in \Z^2}\sum_{\bfl+\bfj=\bfk}c_{\bfl}c_{\bfj} e^{ik_2 2\pi y}
\\
u^3(0,y)&=\sum_{\bfk\in \Z^2} \left( \sum_{\bfl+\bfj+\bfn=\bfk}c_{\bfl}c_{\bfj}c_{\bfn} \right) e^{ik_2 2\pi y},
\end{align*}
%
%and also
%%
%$$
%\int_0^1 e^{2\pi ik_2  y}\, dy=\left\{ \begin{array}{ll}
%1, \quad &k_2=0\\
%0,&{\rm otherwise}
%\end{array}\right.
%$$
it follows that
$$
\begin{aligned}
E&=\sum_{\bfk\in \Z^2\atop k_2=0}\left[\frac{\lambda}{2} \sum_{\bfl+\bfj=\bfk} (4 \pi^2 \ell_2 j_2) c_{\bfl} c_{\bfj} + \frac{1}{2}\sum_{\bfl+\bfj=\bfk}c_{\bfl}c_{\bfj} 
+ \frac{1}{3} \sum_{\bfl+\bfj+\bfn=\bfk} c_{\bfl} c_{\bfj} c_{\bfn} \right] \\
&=\sum_{\bfk\in \Z^2\atop k_2=0} 2 \lambda \pi^2 (\alpha*\alpha)_\bfk+\frac{1}{2}(c*c)_\bfk+\frac{1}{3}(c*c*c)_\bfk
\end{aligned}
$$
where $\alpha_\bfk \bydef c_\bfk k_2$. Fixing a value for $E$, we replace the equation $h_{0,0}$ by $$\sum_{\bfk\in \Z^2\atop k_2=0} 2 \lambda \pi^2(\alpha*\alpha)_\bfk+\frac{1}{2}(c*c)_\bfk+\frac{1}{3}(c*c*c)_\bfk-E$$  to remove the underdeterminacy of the system.

\begin{remark}
We use the augmented system, that is the system with the energy relation instead of $h_{0,0}$ for the numerical computation of the solution. Once the numerical solution is obtained,  for the validation we fix $c_{0,0}$ and solve for the other coefficients of the system $h_\bfk$ for $\bfk\neq 0$.
\end{remark}

\section{The rigorous computational method} \label{sec:rigorous_computation}
  
The first step of the rigorous computational method consist of transforming the problem of looking for periodic orbits of \eqref{eq:boussinesq} into an equivalent problem of the form $f(x)=0$.
In this process, we need to make sure that the solutions of $f=0$ will be locally isolated, as we aim at using the contraction mapping theorem to prove their existence.

\subsection{Defining the operator equation \boldmath $f(x)=0$ \unboldmath}

By setting $c_{k_1,0}=0$ for all $k_1\neq 0$, an even/even solution of the PDE is given by solving 
$$
h_\bfk=0,\quad \text{for all } k_1\geq 0 , k_2>0
$$
for the coefficients $\{c_{0,0}, \{c_\bfk\}_{k_1\in \Z,k_2\in \Z\setminus\{0\}}\}$ with the restrictions $c_{\pm k_1,\pm k_2}=c_{k_1,k_2}$. % Once the solution has been proved to exists, by different argument we conclude that it corresponds to a even/even function.
Later on we remove $c_{0,0}$ from the unknowns and, because of the symmetry constraints, we solve the system for the coefficients  $c_\bfk=c_{k_1,k_2}$ with $k_1\geq 0$ and $k_2>0$ only. 
This motivates the following definitions. Denote 
\[
%\mathcal Z^2=\Z\times \Z\setminus \{0\}\, \quad  {\rm and}\quad  
 \mathcal Z^2_+ \bydef \{ \bfk = (k_1,k_2): k_1\geq 0, k_2>0\},
\]
%\Z\times \Z\setminus \{0\}$.
and introduce the spaces
$$
X \bydef \Big\{x=\{x_\bfk\}_{\bfk\in \mathcal Z^2_+}\Big\}
\quad  \text{and} \quad
\mathcal X \bydef \Big\{ x=\{x_\bfk\}_{\bfk\in \Z^2} \Big\}.
$$
Defined the function
$$
\begin{aligned}
{\rm sym}: ~ & X\to \mathcal X\\
&x\mapsto {\rm sym}(x)=x_{sym}
\end{aligned}
$$
by
$$
(x_{sym})_{k_1,0}=0, \quad (x_{sym})_{\pm k_1,\pm k_2}=x_\bfk, \quad \text{for all } \bfk\in \mathcal Z^2_+.
$$
Note that $(x_{sym})_{0,0}=0$. Define
$$
X_{sym}=\Big\{ {\rm sym}(x) : x\in X\Big\}\subset \mathcal X.
$$
Fix $c_{0,0}\in \R$. 
\begin{itemize}
\item Given $x\in \mathcal X$, let $y \bydef c_{0,0}+x \in \mathcal X$ given by $y_{0,0}=c_{0,0}+x_{0,0}$ and $y_\bfk=x_\bfk$ for all $\bfk\neq (0,0).$
\item For $x\in X$, denote 
\begin{equation} \label{eq:c(x)}
c(x) \bydef c_{0,0}+{\rm sym}(x) \in \mathcal X.
\end{equation}
\item For any $x,y\in \mathcal X$ denote by $x*y$ the standard convolution product $$(x*y)_\bfk \bydef \sum_{\bfl+\bfj=\bfk}x_\bfl y_\bfj.$$
\item For any $x\in X$ denote 
\begin{equation} \label{eq:c^2(x)}
c^2(x) \bydef c(x)*c(x).
\end{equation}
\item Given $x\in \mathcal X$, define $(c_{0,0}*x)$ component-wise as $(c_{0,0}*x)_\bfk=c_{0,0}x_\bfk$.
\end{itemize}
The last definition and indeed all the definitions involving $c_{0,0}$ follow immediately from the standard operations between sequences, once the scalar $c_{0,0}$ is seen as a sequence where all but the $(0,0)$ coefficients are zero.

For a multi-index with positive entries $\bfm=(m_1,m_2)$, denote 
\[
\bfFm \bydef \{\bfk\in \mathcal Z^2_+: k_1<m, k_2<m\} \qquad \text{and} \qquad \bfIm \bydef \mathcal Z^2_+\setminus \bfFm.
\]
For a set $X$ or a sequence $x$ we often adopt the notation $X_\bfFm$ and $x_\bfFm$ to denote the restriction to those elements whose index $\bfk$ belongs to $\bfFm$. The same is done for $\bfIm$. For instance $X_\bfFm=\{x=\{x_\bfk\}_{\bfk\in \bfFm}\}$.

%Denote by 
%$$
%X=\{x=\{c_\bfk\}_{\bfk\in \mathcal Z^2_+}\}
%$$
%and
%$$
%X_{sym}=\{x_{sym}=\{c_\bfk\}_{\bfk\in \mathcal Z^2} : c_{\pm k_1,\pm k_2}=c_{k_1,k_2}\}.
%$$
%Clearly, because of the symmetry, the space $X$ and $X_{sym}$ are in one-to-one correspondence. For  a given $x\in X_+$ it corresponds the bi-infinite element $x_{sym}$ in $X_{sym}$ so that $(x_{sym})_\bfk=x_\bfk$ for all $\bfk\in \mathcal Z^2_+$.  
%
%The aim is to find $c_{0.0}$ and $x\in X$ such that $h_\bfk(c_{0,0},x_{sym})=0$ for any $\bfk\in \mathcal Z^2_+$.

%For a choice of $\bfm$, 
Assume that a numerical solution $\{\bar c_{0,0},\bar x=\{\bar x_\bfk\}_{\bfk\in \bfFm}\}$ has been computed so that $h_\bfk\approx 0 $ for any $\bfk\in \bfFm$, as discussed in the previous section.
Now, let us set $c_{0,0}=\bar c_{0,0}$ and remove $c_{0,0}$ from the set of unknowns. % Since the proper unknowns are only $c_\bfk$ with $\bfk\in \mathcal Z^2_+$, we formulate the problem as solving
 Then according to the previous definitions, the problem consists of finding $x\in X$ so that 
$$
h(x)=0,\quad x=\{x_\bfk\}_{\bfk\in \mathcal Z^2_+},\quad h=\{h_\bfk\}_{\bfk\in \mathcal Z^2_+}
$$
where
$$
h_\bfk(x)=\mu_\bfk x_\bfk - 4 \pi^2 k_2^2 (c^2(x))_\bfk.
$$

Recall \eqref{eq:hk}. Since we only need to solve $h_\bfk=0$ for $k_2> 0$, we can divide it by $4\pi^2k^2_2$.  Replace the previous $h_\bfk$ with
\begin{equation} \label{eq:f_k}
f_\bfk(x) \bydef \left(\frac{L^2}{4\pi^2}\frac{k_1^2}{k_2^2}+4\pi^2\lambda k_2^2-1\right)x_{\bfk}-(c^2(x))_\bfk=\mu_\bfk x_\bfk-(c^2(x))_\bfk
\end{equation}
where
\begin{equation} \label{eq:new_mu_k}
\mu_\bfk \bydef \mu_\bfk(L,\lambda)=\frac{L^2}{4\pi^2}\frac{k_1^2}{k_2^2}+4\pi^2\lambda k_2^2-1.
\end{equation}
From \eqref{eq:f_k}, we define the problem 
\begin{equation} \label{eq:f=0}
f(x) =0,
\end{equation}
where $f=\{f_{\bfk} \}_{\bfk \in \mathcal Z^2_+}$. The rest of the paper consist of developing a computer-assisted approach to find solutions 
of \eqref{eq:f=0}. The first step is to define a Banach space in which we look for solutions. 

\subsection{Norms and Banach space}

For a choice of $\nu>1$, we endow $X$ and $\mathcal X$ with the norm
$$
\|x\|_\nu\bydef \sum_{\bfk\in \mathcal Z^2_+}|x_\bfk|\nu^{|\bfk|}
\qquad \text{and} \qquad
\|x\|_\nu^*\bydef  \sum_{\bfk\in \Z^2}|x_\bfk|\nu^{|\bfk|},
$$
respectively, where, given $\bfk=(k_1,k_2)$, we use the standard notation $|\bfk|=|k_1|+|k_2|$. The second one is the usual geometric norm defined on the space of bi-infinite sequences. 
Accordingly, denote the closed ball in $X$ with radius $r$ centered at the origin by
$$
B(r)=\{ x\in X: \|x\|_\nu\leq r\}.
$$ 

\begin{lemma} \label{lem:norm_comparisons}
For any $x\in X$
$$
\|{\rm sym}(x)\|_\nu^*\leq 4 \| x\|_\nu.
$$
\end{lemma}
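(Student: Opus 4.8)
The goal is to compare the $\|\cdot\|_\nu^*$ norm of the symmetrized sequence $\mathrm{sym}(x) \in \mathcal X$ with the $\|\cdot\|_\nu$ norm of the original sequence $x \in X$. The plan is a direct computation: write out $\|\mathrm{sym}(x)\|_\nu^* = \sum_{\bfk\in\Z^2} |(x_{sym})_\bfk|\, \nu^{|\bfk|}$, and organize the sum over $\Z^2$ according to the sign pattern of $(k_1,k_2)$. Since $\mathrm{sym}(x)$ vanishes whenever $k_2 = 0$ (including at $(0,0)$), the only contributing indices are those with $k_2 \neq 0$, and each such $\bfk \in \Z^2$ corresponds, via the symmetrization rule $(x_{sym})_{\pm k_1,\pm k_2} = x_{(|k_1|,|k_2|)}$, to a unique index $(|k_1|,|k_2|) \in \mathcal Z^2_+$.

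The key observation is a counting of multiplicities: for a fixed index $\bfj = (j_1,j_2) \in \mathcal Z^2_+$ (so $j_1 \geq 0$, $j_2 > 0$), how many $\bfk \in \Z^2$ satisfy $(|k_1|,|k_2|) = (j_1,j_2)$? If $j_1 > 0$ there are four such $\bfk$ (the four sign choices $(\pm j_1, \pm j_2)$); if $j_1 = 0$ there are only two ($(0,\pm j_2)$). In all cases this multiplicity is at most $4$. Moreover, $|\bfk| = |k_1| + |k_2| = j_1 + j_2 = |\bfj|$ for each of these $\bfk$, so the weight $\nu^{|\bfk|}$ equals $\nu^{|\bfj|}$ and the coefficient magnitude $|(x_{sym})_\bfk| = |x_\bfj|$. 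Hence
\[
\|\mathrm{sym}(x)\|_\nu^* = \sum_{\bfj \in \mathcal Z^2_+} (\text{mult. of }\bfj)\, |x_\bfj|\, \nu^{|\bfj|} \leq 4 \sum_{\bfj \in \mathcal Z^2_+} |x_\bfj|\, \nu^{|\bfj|} = 4\|x\|_\nu,
\]
which is exactly the claimed inequality. One should also note that all terms are nonnegative so there is no issue with rearranging the (possibly infinite) sum, and the inequality holds vacuously in the value $+\infty$ if $x \notin X$ in the normed sense.

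There is essentially no obstacle here; the only thing requiring a moment of care is getting the multiplicity bookkeeping right, in particular remembering that $\mathcal Z^2_+$ allows $k_1 = 0$ but forces $k_2 > 0$, so the "degenerate" directions that could reduce the count of $4$ are exactly those with $k_1 = 0$ (giving $2$), never those with $k_2 = 0$ (which are excluded from both sides). Since $2 \le 4$, the uniform bound $4$ is safe, and indeed the constant $4$ is not claimed to be sharp. I would present the argument in two or three lines: state the support restriction, state the multiplicity-at-most-$4$ fact together with the weight-preservation $|\bfk| = |\bfj|$, and conclude.
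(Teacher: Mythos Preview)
Your proof is correct and follows essentially the same approach as the paper: both arguments decompose the sum over $\Z^2$ according to whether $k_1=0$ or $k_1\neq 0$ (with $k_2=0$ contributing nothing), obtain multiplicities $2$ and $4$ respectively, and bound both by $4$. The paper writes this out as an explicit three-part splitting of the sum, while you phrase it as a preimage-counting argument, but the content is identical.
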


\begin{proof}
\begin{align*}
\|{\rm sym}(x)\|_\nu^*&=\sum_{|k_1|>0,|k_2|>0}|(x_{sym})_\bfk|\nu^{|\bfk|}+\sum_{k_1\in \Z,k_2=0}|(x_{sym})_\bfk|\nu^{|\bfk|}+\sum_{k_1=0,|k_2|>0}|(x_{sym})_\bfk|\nu^{|\bfk|}\\
&=4\sum_{k_1>0,k_2>0}|x_\bfk|\nu^{|\bfk|}+2\sum_{k_1=0,k_2>0}|x_\bfk|\nu^{|\bfk|} \\
& \le 4\sum_{\bfk\in \mathcal Z^2_+}|x_\bfk|\nu^{|\bfk|}  =4\|x\|_\nu. \qedhere
\end{align*}
\end{proof}

\begin{remark}
It is often useful to consider the norm $\|\cdot\|_\nu$ as a norm on $X_{sym} \subset \cX$. To make sense of this, note that any $x = \{x_\bfk\}_{\bfk \in \Z^2} \in X_{sym}  \subset \cX$ satisfies the symmetries $x_{k_1,0}=0$ and $x_{\pm k_1,\pm k_2}=x_\bfk$, for all $\bfk \in \Z^2$, and therefore is entirely defined over the indices $\bfk \in \mathcal Z^2_+$. Hence, we can consider the norm on $X_{sym}$ as 
\[
\| x \|_\nu = \sum_{\bfk\in \mathcal Z^2_+}|x_\bfk|\nu^{|\bfk|}
\]
\end{remark}

\begin{lemma}
For any $x\in X$
$$
\|c_{0,0}*{\rm sym}(x)\|_\nu=|c_{0,0}|\|x\|_\nu.
$$
\end{lemma}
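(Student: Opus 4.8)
The plan is to reduce everything to the definition of $c_{0,0}$ as a sequence supported at the single index $(0,0)$, which makes the convolution with ${\rm sym}(x)$ degenerate to component-wise scalar multiplication. Concretely, by the definition stated just before the lemma, $(c_{0,0}*{\rm sym}(x))_\bfk = c_{0,0}\,({\rm sym}(x))_\bfk$ for every $\bfk$. First I would check that $c_{0,0}*{\rm sym}(x)$ again lies in $X_{sym}\subset\cX$: the sequence ${\rm sym}(x)$ satisfies $({\rm sym}(x))_{k_1,0}=0$ and $({\rm sym}(x))_{\pm k_1,\pm k_2}=x_\bfk$, and multiplying every entry by the fixed scalar $c_{0,0}$ preserves both relations. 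Hence, by the Remark preceding the lemma, the norm $\|\cdot\|_\nu$ of $c_{0,0}*{\rm sym}(x)$ is computed as a sum over $\bfk\in\mathcal Z^2_+$ only.

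Then I would simply compute
\[
\|c_{0,0}*{\rm sym}(x)\|_\nu
= \sum_{\bfk\in\mathcal Z^2_+} \bigl|c_{0,0}\,({\rm sym}(x))_\bfk\bigr|\,\nu^{|\bfk|}
= |c_{0,0}| \sum_{\bfk\in\mathcal Z^2_+} |({\rm sym}(x))_\bfk|\,\nu^{|\bfk|},
\]
and use that $({\rm sym}(x))_\bfk = x_\bfk$ for every $\bfk\in\mathcal Z^2_+$ (this is the $+k_1,+k_2$ instance of the defining relation for ${\rm sym}$), so the remaining sum is exactly $\|x\|_\nu$, which gives the claimed identity.

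There is essentially no real obstacle here; the only point requiring a small amount of care is the bookkeeping about which norm is in play — the left-hand side uses $\|\cdot\|_\nu$ interpreted on $X_{sym}$ via the Remark (a sum over $\mathcal Z^2_+$, each index counted once) rather than the bi-infinite norm $\|\cdot\|_\nu^*$ of Lemma~\ref{lem:norm_comparisons}. Because no index is repeated in this sum, no multiplicity factor appears and the relation is an exact equality, in contrast with the inequality in Lemma~\ref{lem:norm_comparisons}.
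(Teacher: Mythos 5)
Your proof is correct and follows essentially the same route as the paper's one-line argument, namely that $(c_{0,0}*{\rm sym}(x))_\bfk = c_{0,0}(x_{sym})_\bfk$ together with $\|x_{sym}\|_\nu = \|x\|_\nu$; you merely spell out the bookkeeping (that the product stays in $X_{sym}$ and that the norm is the single sum over $\mathcal Z^2_+$) that the paper leaves implicit. No issues.
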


\begin{proof}
This follows using that $(c_{0,0}*{\rm sym}(x))_\bfk=c_{0,0}(x_{sym})_\bfk$ and $\|x_{sym}\|_\nu=\|x\|_\nu$. \qedhere
\end{proof}

\begin{lemma}
For any $x,y\in X$
$$
\|{\rm sym}(x)*{\rm sym}(y)\|_\nu\leq 16\|x\|_\nu\|y\|_\nu. 
$$
\end{lemma}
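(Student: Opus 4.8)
The plan is to reduce this bound to the submultiplicativity of the full weighted $\ell^1$ norm $\|\cdot\|_\nu^*$ on bi-infinite sequences, followed by two applications of Lemma~\ref{lem:norm_comparisons}. Write $a = {\rm sym}(x)$ and $b = {\rm sym}(y)$, so $a,b \in X_{sym} \subset \mathcal X$. By definition $\|a*b\|_\nu = \sum_{\bfk \in \mathcal Z^2_+} |(a*b)_\bfk|\,\nu^{|\bfk|}$, a sum restricted to $\mathcal Z^2_+$; since all terms are nonnegative I would first enlarge the index set and bound this by $\sum_{\bfk \in \Z^2} |(a*b)_\bfk|\,\nu^{|\bfk|}$. (It is worth noting that $a*b$ need not itself satisfy the symmetries defining $X_{sym}$, so the left-hand norm really is just the $\mathcal Z^2_+$-truncation of the full norm.)

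Next I would estimate termwise: $|(a*b)_\bfk| \le \sum_{\bfl+\bfj=\bfk} |a_\bfl|\,|b_\bfj|$, and use the elementary inequality $\nu^{|\bfk|} \le \nu^{|\bfl|}\nu^{|\bfj|}$ valid for every splitting $\bfl+\bfj = \bfk$, which holds because $|\bfk| = |\ell_1+j_1| + |\ell_2+j_2| \le |\bfl| + |\bfj|$ together with $\nu > 1$. Substituting these two inequalities and re-indexing, the double sum over $\bfk$ and over pairs $\bfl + \bfj = \bfk$ decouples into $\big(\sum_{\bfl \in \Z^2} |a_\bfl|\,\nu^{|\bfl|}\big)\big(\sum_{\bfj \in \Z^2} |b_\bfj|\,\nu^{|\bfj|}\big) = \|a\|_\nu^* \,\|b\|_\nu^*$. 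This is precisely the standard fact that $(\ell^1_\nu, *)$ is a Banach algebra under the convolution product.

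Finally, Lemma~\ref{lem:norm_comparisons} gives $\|a\|_\nu^* = \|{\rm sym}(x)\|_\nu^* \le 4\|x\|_\nu$ and likewise $\|b\|_\nu^* \le 4\|y\|_\nu$, and multiplying produces the constant $16$, which completes the argument. There is no genuine obstacle here; the only point requiring a moment's care is the first step — making sure one \emph{enlarges} rather than shrinks the summation index set when passing from $\|\cdot\|_\nu$ to $\|\cdot\|_\nu^*$, and recognizing that $a*b \notin X_{sym}$ in general, so the earlier remark's identification of the two norms on $X_{sym}$ cannot be invoked directly on the product.
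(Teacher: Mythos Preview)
Your proposal is correct and follows essentially the same route as the paper: bound $\|{\rm sym}(x)*{\rm sym}(y)\|_\nu$ by $\|{\rm sym}(x)\|_\nu^*\,\|{\rm sym}(y)\|_\nu^*$ via the Banach-algebra property of $(\ell^1_\nu,*)$, then apply Lemma~\ref{lem:norm_comparisons} twice to obtain the factor $16$. The only cosmetic difference is that you enlarge the outer index set from $\mathcal Z^2_+$ to $\Z^2$ at the very start, whereas the paper keeps the outer sum over $\mathcal Z^2_+$ and enlarges the inner sum after swapping the order of summation; both arrive at the same product $\|x_{sym}\|_\nu^*\|y_{sym}\|_\nu^*$.
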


\begin{proof}
Denoting $x_{sym} = {\rm sym}(x)$ and $y_{sym} = {\rm sym}(y)$, we get that
\begin{align*}
\|x_{sym}*y_{sym}\|_\nu&=\sum_{\bfk\in \mathcal Z^2_+}\left| \sum_{\bfj+\bfl=\bfk\atop \bfj,\bfl\in \Z^2}(x_{sym})_\bfj(y_{sym})_\bfl\right|\nu^{|\bfk|} \\
& =\sum_{\bfk\in \mathcal Z^2_+}\left| \sum_{\bfj\in \Z^2}(x_{sym})_\bfj(y_{sym})_{\bfk-\bfj}\right|\frac{\nu^{|\bfk|}}{\nu^{|\bfj|}}\nu^{|\bfj|} \\
&\le \sum_{\bfj\in \Z^2}|(x_{sym})_\bfj|\nu^{|\bfj|}\sum_{\bfk\in \mathcal Z^2_+}|(y_{sym})_{\bfk-\bfj}|\nu^{|\bfk|-|\bfj|} \\
& \le  \sum_{\bfj\in \Z^2}|(x_{sym})_\bfj|\nu^{|\bfj|} \sum_{\bfk\in \mathcal Z^2_+}|(y_{sym})_{\bfk-\bfj}|\nu^{|\bfk-\bfj|} \\
& \le  \sum_{\bfj\in \Z^2}|(x_{sym})_\bfj|\nu^{|\bfj|} \sum_{\bfk\in \Z}|(y_{sym})_{\bfk-\bfj}|\nu^{|\bfk-\bfj|} \\
&= \|x_{sym}\|_\nu^*\|y_{sym}\|_\nu^*  \leq 16\|x\|_\nu\|y\|_\nu.  \qedhere
\end{align*}
\end{proof}

We are now interested in studying linear functionals and linear operators acting on $X$. 
Given a linear operator $\mathcal L:X\to X$, define
$$
||\mathcal L||=\sup_{\|x\|_\nu=1}\|\mathcal L(x)\|_\nu.
$$
It readily follows that $\|\mathcal L(x)\|_\nu\leq ||\mathcal L||\, \|x\|_\nu$ for any $x\in X$. 

A linear operator $\mathcal L$ on $X$ is determined by the action of $\mathcal L$ on the components of $x\in X$.
Thus, associated to an operator $\mathcal L$ there is a uniquely defined matrix of operators, still denoted by $\mathcal L=\{\mathcal L_{(\bfk,\bfj)}\}_{(\bfk,\bfj)\in \mathcal Z^2_+\times \mathcal Z^2_+}$, so that
$$
(\mathcal L(x))_\bfk=\sum_{\bfj\in \mathcal Z^2_+}\mathcal L_{(\bfk,\bfj)}x_{\bfj}.
$$ 
Treating each row of $\mathcal L$ as a linear functional and using that the dual space of $X$ is a weighted $\ell^\infty$ space, it follows that the operator norm is given by
\begin{equation} \label{eq:oper_norm}
||\mathcal L||=\sup_{\bfj\in \mathcal Z^2_+}\frac{1}{\nu^{|\bfj|}}\sum_{\bfk\in \mathcal Z^2_+}|\mathcal L_{(\bfk,\bfj)}|\nu^{|\bfk|}.
\end{equation}
We refer to \cite{HLM} for more details on how to compute such operator norms using the theory of dual spaces.
Next, we introduce the Newton-like operator whose fixed points correspond to the wanted periodic orbits. % This process begins 

\subsection{The Newton-like operator \boldmath $T(x)=x-Af(x)$ \unboldmath}

The construction of $T$ begins by assuming the existence of $\bar x=\{\bar x_\bfk\}_{\bfk\in \bfFm}$ such that $f_\bfk(\bx) \approx 0 $ for any $\bfk\in \bfFm$. 
The Newton-like operator is defined as $T(x)=x-Af(x)$, where $A$ is a carefully chosen approximate inverse for $Df(\bx)$. Next we introduce the definition of $A$.

Recalling \eqref{eq:c(x)} and the definition of $c^2(x)$ in \eqref{eq:c^2(x)}, let us now compute the derivative of $f_\bfk$.
Given $x,v\in X$,
\begin{equation}\label{eq:derh}
\begin{aligned}
Df_\bfk(x)(v)&=\lim_{\epsilon\to 0}\frac{h_\bfk(x+\epsilon v)-h_\bfk(x)}{\epsilon} \\
&=\mu_\bfk v_\bfk - \lim_{\epsilon\to 0} \frac{1}{\epsilon}\Big[ c^2(x+\epsilon v)-c^2(x)\Big]_\bfk\\
&=\mu_\bfk v_\bfk-\lim_{\epsilon\to 0}\frac{1}{\epsilon}\Big[ c(x+\epsilon v)*c(x+\epsilon v)-c(x)*c(x)\Big]_\bfk\\
&=\mu_\bfk v_\bfk-\lim_{\epsilon\to 0}\frac{1}{\epsilon}\Big[ (c_{0,0}+{\rm sym}(x)+\epsilon {\rm sym}(v))*(c_{0,0}+{\rm sym}(x)+\epsilon {\rm sym}(v))\\
&\qquad \qquad \qquad \qquad -(c_{0,0}+{\rm sym}(x))*(c_{0,0}+{\rm sym}(x))\Big]_\bfk\\
&=\mu_\bfk v_\bfk-2\Big[(c_{0,0}+{\rm sym}(x))*{\rm sym}(v)\Big]_\bfk.
\end{aligned}
\end{equation}
By writing explicitly the components of  ${\rm sym}(v)$ in terms of those of $v$, it follows that the entries of the Jacobian matrix of $h$ with respect to $x$ have the form 
$$
\frac{\partial f_\bfk}{\partial x_\bfj}(x)=\mu_\bfk\delta_{\bfk,\bfj}-2\mathcal C_{\bfk,\bfj}(x),\qquad \bfk, \bfj\in \mathcal Z^2_+
$$
where

\begin{equation}\label{eq:Ckj}
\mathcal C_{\bfk,\bfj}(x) 
=\left\{\begin{array}{ll}
(x_{\bfk-\bfj}+x_{\bfk+\bfj}), &j_1=0\\
(x_{\bfk-\bfj}+x_{\bfk+\bfj}+x_{\bfk-(j_1,-j_2)}+x_{\bfk+(j_1,-j_2)}), &j_1>0.
\end{array}\right.
\end{equation}
Denote by $Df(x)$ the Jacobian of $f$ at $x$, that is $Df(x)_{(\bfk,\bfj)}= \frac{\partial f_\bfk}{\partial x_\bfj}(x)$, and $D^{(\bfm)} f (\bx)$ the Jacobian of $f_{\bfFm}$ with respect to $x_{\bfFm}$ at $\bar x$, that is $D^{(\bfm)} f (\bx)=\{D^{(\bfm)} f(\bx)_{(\bfk,\bfj)}\}_{\bfk,\bfj\in \bfFm}$ where $D^{(\bfm)} f(\bx)_{(\bfk,\bfj)}=\frac{\partial f_\bfk}{\partial x_\bfj}(\bar x)$.

Let also $A^{(\bfm)}=\{ A^{(\bfm)}_{(\bfk,\bfj)}\}_{\bfk,\bfj\in F_m}$ be an approximate inverse of $D^{(\bfm)} f$ and define the linear operator $A=\{A_{(\bfk,\bfj)}\}_{(\bfk,\bfj)\in\mathcal Z^2_+}$ component-wise by
\begin{equation} \label{eq:A}
A_{(\bfk,\bfj)}= 
\begin{cases}
A^{(\bfm)}_{(\bfk,\bfj)},\quad &\bfk,\bfj\in \bfFm\\
\mu_\bfk^{-1},&\bfk=\bfj, \bfk\in \bfIm\\
0,&{\rm otherwise} .
\end{cases}
\end{equation}

Define the Newton-like operator $T:X\to X$ by
\begin{equation} \label{eq:T}
T(x)=x-Af(x). 
\end{equation}

\subsection{The radii polynomial approach}

Consider $\bx \in X$ and denote 
\[
B_r(\bx) \bydef \bx + B(r) = \{ x \in X: \|x -\bx \|_\nu\leq r\}
\]
the closed ball in $X$ of radius $r>0$ with center $\bx$. In general, $\bx$ is a point that is an approximation solution of $f(x)=0$, typically obtained via Newton's method.

Let $Y$, $Z_0$, $Z_1$ and $Z_2$ be bounds satisfying 
\begin{align*}
\|T(\bar x)-\bar x\|_\nu=\|Af(\bar x)\|_\nu & \le Y \\
\sup_{u\in B(1)}\|(I-AA^\dag)u\|_\nu & \le Z_0, \\
\sup_{u,v\in B(1)}\|A(Df(\bar x+rv)-A^\dag)u\|_\nu & \le Z_1+Z_2r,
\end{align*}
where the linear operator $A^\dag=\{A^\dag_{(\bfk,\bfj)}\}_{(\bfk,\bfj)\in\mathcal Z^2_+}$ is defined component-wise as
\[
A^\dag_{(\bfk,\bfj)}
=
\begin{cases}
D^{(\bfm)}f_{(\bfk,\bfj)}(\bx),\quad &\bfk,\bfj\in \bfFm\\
\mu_\bfk,&\bfk=\bfj, \bfk\in \bfIm\\
0,&{\rm otherwise} .
\end{cases}
\]

Once the bounds $Y$, $Z_0$, $Z_1$ and $Z_2$ have been found, define the {\em radii polynomial} by
\begin{equation} \label{eq:radii_polynomial}
p(r)=Y+(Z_0+Z_1-1)r+Z_2 r^2.
\end{equation}

\begin{lemma} \label{lem:rad_poly}
Assume that the linear operator $A$ defined component-wise by \eqref{eq:A} is injective. Let $p(r)$ the radii polynomial given by \eqref{eq:radii_polynomial}. If there exists $r>0$ such that $p(r)<0$, then 
there exists a unique $\tx \in B_r(\bx)$ such that $f(\tx)=0$.
\end{lemma}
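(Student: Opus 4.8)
The plan is to show that the Newton-like operator $T$ defined in \eqref{eq:T} is a contraction on the ball $B_r(\bx)$ whenever $p(r)<0$, and then invoke the Banach fixed-point theorem together with the injectivity of $A$ to conclude that the resulting fixed point is the unique zero of $f$ in $B_r(\bx)$. First I would verify that $T$ maps $B_r(\bx)$ into itself. For $x \in B_r(\bx)$, write $x = \bx + rv$ with $v \in B(1)$, and estimate $\|T(x) - \bx\|_\nu$ by inserting $T(\bx)$: using the mean value inequality in the form $T(x) - T(\bx) = \int_0^1 DT(\bx + t(x-\bx))(x-\bx)\,dt$ and noting that $DT(\cdot) = I - A\,Df(\cdot)$, one gets
\begin{align*}
\|T(x) - \bx\|_\nu &\le \|T(\bx) - \bx\|_\nu + \sup_{t\in[0,1]}\|(I - A\,Df(\bx + t(x-\bx)))(x - \bx)\|_\nu.
\end{align*}
The first term is bounded by $Y$. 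For the second, I would split $I - A\,Df = (I - AA^\dagger) + A(A^\dagger - Df)$, so that the supremum is bounded by $(Z_0 + Z_1 + Z_2 r)\,r$ using the three $Z$-bounds (with $x - \bx = rv$, $v \in B(1)$). Hence $\|T(x) - \bx\|_\nu \le Y + (Z_0 + Z_1)r + Z_2 r^2 = p(r) + r < r$, so $T(B_r(\bx)) \subset B_r(\bx)$.

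Next I would show $T$ is a contraction on $B_r(\bx)$. For $x_1, x_2 \in B_r(\bx)$, again by the mean value inequality
\begin{align*}
\|T(x_1) - T(x_2)\|_\nu \le \sup_{x \in B_r(\bx)} \|(I - A\,Df(x))(x_1 - x_2)\|_\nu \le (Z_0 + Z_1 + Z_2 r)\,\|x_1 - x_2\|_\nu,
\end{align*}
where the last inequality again uses the splitting $I - A\,Df(x) = (I - AA^\dagger) + A(A^\dagger - Df(x))$ and the definitions of $Z_0$, $Z_1$, $Z_2$ (here $x = \bx + rv$ with $v \in B(1)$, so the $Z_1 + Z_2 r$ bound applies to $\|A(Df(x) - A^\dagger)u\|_\nu$ for $u \in B(1)$). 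Since $p(r) < 0$ means $Z_0 + Z_1 + Z_2 r < 1 - Y/r < 1$, the map $T$ is a contraction with constant $\kappa \bydef Z_0 + Z_1 + Z_2 r < 1$. The Banach fixed-point theorem on the complete metric space $B_r(\bx)$ then yields a unique $\tx \in B_r(\bx)$ with $T(\tx) = \tx$.

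Finally I would translate the fixed point back into a zero of $f$. From $T(\tx) = \tx$ we get $A f(\tx) = 0$, and since $A$ is assumed injective, this forces $f(\tx) = 0$. Uniqueness of the zero in $B_r(\bx)$ follows from uniqueness of the fixed point: any zero $x^\ast \in B_r(\bx)$ of $f$ satisfies $T(x^\ast) = x^\ast - A f(x^\ast) = x^\ast$, hence $x^\ast = \tx$.

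The main technical point to be careful about is the passage from the pointwise bounds on $I - AA^\dagger$ and $A(Df(\bx + rv) - A^\dagger)$ over the unit ball $B(1)$ to the Lipschitz estimate on $T$ over $B_r(\bx)$: one must justify the mean value inequality in this Banach space setting (the integral form, or a telescoping/limiting argument along the segment, since $f$ is a polynomial map and hence $C^1$ with $Df$ itself affine in $x$, so the sup over the segment equals the sup over endpoints), and one must check that $Df(\bx + t(x_1 - x_2))$ for $t \in [0,1]$ stays within the family parametrized by $\bx + rv$, $v \in B(1)$, which it does because $B_r(\bx)$ is convex. The rest is bookkeeping with the radii polynomial inequality.
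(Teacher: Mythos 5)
Your proposal is correct and is essentially the argument the paper relies on: the paper's proof is just a citation to Proposition~1 of \cite{HLM}, which carries out exactly this standard contraction argument (splitting $I-A\,Df$ as $(I-AA^\dagger)+A(A^\dagger-Df)$, using the mean value inequality along the convex ball, and invoking injectivity of $A$ to pass from the fixed point to a zero of $f$). No substantive differences.
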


\begin{proof}
See the proof of Proposition~1 in \cite{HLM}.
\end{proof}

The radii polynomial approach therefore consists of constructing explicitly the polynomial $p(r)$ defined in \eqref{eq:radii_polynomial}, to find $r>0$ such that $p(r)<0$, and to apply Lemma~\ref{lem:rad_poly} to obtain a true solution $\tx \in B_r(\bx)$ such that $f(\tx)=0$.

To perform the computation of the bounds $Y$, $Z_0$, $Z_1$ and $Z_2$ involved in the radii polynomial, we assume that the finite dimensional parameter $\bfm=(m_1,m_2)$ satisfies the condition
\begin{equation}\label{eq:cond_m}
m_2 \geq \max \left\{ m_1, \frac{L}{2\pi^2 \sqrt{\lambda}} \right\}.
\end{equation}

\subsubsection{Construction of the bound \boldmath $Y$ \unboldmath}

First note that $f_\bfk(\bar x)=0$ for any $\bfk\in \vect{I}_{2\bfm-1}$. Letting $y \bydef Af(\bar x)$, we get
$$
y_\bfk=
\begin{cases}
(A^{(\bfm)} f_{\bfFm}(\bar x))_\bfk,\quad & \bfk\in \bfFm\\
\mu_\bfk^{-1}f_\bfk(\bar x),& \bfk\in \vect{F}_{2\bfm-1}\setminus \bfFm\\
0, &{\rm otherwise}.
\end{cases}
$$
Hence, set
\begin{equation} \label{eq:Y}
Y=\sum_{\bfk\in \vect{F}_{2\bfm-1}}|y_\bfk|\nu^{|\bfk|},
\end{equation}
which is obtained via a finite computation.

\subsubsection{Construction of the bound  \boldmath$Z_0$ \unboldmath}

Denote by $B=I-AA^\dag$. Hence, set 
\begin{equation} \label{eq:Z0}
Z_0 \bydef || B||,
\end{equation}
which is computed using \eqref{eq:oper_norm}, and therefore is obtained by a finite computation. Note that the linear operator $B$ is finite dimensional, indeed $B$ acts non trivially only on the subspace  $X_{\bfFm}$. 

\subsubsection{Construction of the bounds \boldmath $Z_1$ \unboldmath and  \boldmath $Z_2$ \unboldmath}

We abuse notation and identify $u,v \in B(1) \in X$ and their counterparts $u_{sym}, v_{sym} \in X_{sym}$ with $(0,u_{sym})$, $(0,v_{sym})$. Similarly, when $\bar x$ appears in a convolution, it denotes the bi-infinite sequence $(\bar c_{0,0},\bar x_{sym})$. According to \eqref{eq:derh}, the action of $Df(\bar x+rv)$ on the element $ru$ is
\[
\Big(Df(\bar x+rv) u\Big)_{\bfk}=\mu_\bfk u_\bfk-2\Big(\big(c_{0,0}+{\rm sym}(\bar x+rv)\big)*\big({\rm sym}(u)\big)\Big)_\bfk,
\]
and thus
\[
\hspace{-.4cm}
\Big((Df(\bar x+rv)-A^\dag) u \Big)_{\bfk}
= \begin{cases}
 -2\Big(\big(c_{0,0}+{\rm sym}(\bar x)\big)*\big({\rm sym}(u_{\bfIm})\big)\Big)_\bfk -2({\rm sym}(v)*{\rm sym}(u))_\bfk r, &\bfk\in \bfFm\\
 -2\Big(\big(c_{0,0}+{\rm sym}(\bar x)\big)*\big({\rm sym}(u)\big)\Big)_\bfk -2({\rm sym}(v)*{\rm sym}(u))_\bfk r,  &\bfk\in \bfIm.
 %-2(\bar x*u)_\bfk r-2(v*u)_\bfk r^2, &\bfk\in \bfIm
 \end{cases}
\]
Recall $\mathcal C_{\bfk,\bfj}(x)$ from \eqref{eq:Ckj} and  let $\overline C_{\bfk,\bfj}=\mathcal C_{\bfk,\bfj}(\bar x)$. For convenience, let us write explicitly $\overline C_{\bfk,\bfj}$ as
$$
\overline C_{\bfk,\bfj} 
= \begin{cases}
(\bar x_{\bfk-\bfj}+\bar x_{\bfk+\bfj}), &j_1=0 \\
(\bar x_{\bfk-\bfj}+\bar x_{\bfk+\bfj}+\bar x_{\bfk-(j_1,-j_2)}+\bar x_{\bfk+(j_1,-j_2)}), &j_1>0.
\end{cases}
$$
Since $\bar x_\bfk=0$ for $\bfk\not\in \bfFm$, for any $\bfj$ there is only a finite set of $\bfk$ so that $\overline C_{\bfk,\bfj}\neq 0$. For instance, if $\bfj \in \bfFm$ then $\overline C_{\bfk,\bfj}=0$ for any $\bfk\not\in \vect F_{2\vect m}$.
Define the linear operator $\Gamma: X\to X$ component-wise
%$$
%\frac{\partial h_\bfk}{\partial c_\bfj}(x)=\left\{\begin{array}{ll}
%\mu_\bfk\delta_{\bfk,\bfj}-2(c_{\bfk-\bfj}+c_{\bfk+\bfj}), &j_1=0\\
%\mu_\bfk\delta_{\bfk,\bfj}-2(c_{\bfk-\bfj}+c_{\bfk+\bfj}+c_{\bfk-(j_1,-j_2)}+c_{\bfk+(j_1,-j_2)}), &j_1>0\\
%\end{array}\right.
%$$
$$
\Gamma_{(\bfk,\bfj)}
=\begin{cases}
0,\quad &(\bfk,\bfj)\in \bfFm \times \bfFm \\
-2\overline C_{\bfk,\bfj}, &{\rm otherwise}.
\end{cases}
$$
Then $(Df(\bar x+rv)-A^\dag) u = \Gamma u - 2 ( v_{sym}*u_{sym}) r$, and so
\[
A(Df(\bar x+rv)-A^\dag)u  =  (A\Gamma)u  - 2 A(v_{sym}*u_{sym})  r.
\]
Next, we look for $Z_1,Z_2$ satisfying $|| A\Gamma|| \le Z_1$, and $32||A|| \le Z_2$, since $\|v_{sym}*u_{sym}\|_\nu\leq 16$. \\

\noindent \underline{\bf Computation of \boldmath $Z_1$} \\ 

For any $(\bfs,\bfq)\in \mathcal Z^2_+\times \mathcal Z^2_+$, $(A\Gamma)_{(\bfs,\bfq)}=\sum_{\bfj\in \mathcal Z^2_+}A_{(\bfs,\bfj)}\Gamma_{(\bfj,\bfq)}$ and recalling \eqref{eq:oper_norm}, 
\begin{align*}
|| A\Gamma||&=\sup_{\bfq\in \mathcal Z^2_+}\frac{1}{\nu^{|\bfq|}}\sum_{\bfs\in \mathcal Z^2_+}|(A\Gamma)_{(\bfs,\bfq)}|\nu^{|\bfs|}\\
&=\max\left\{\sup _{\bfq\in \vect F_{2\vect m}}\frac{1}{\nu^{|\bfq|}}\sum_{\bfs\in \mathcal Z^2_+}|(A\Gamma)_{(\bfs,\bfq)}|\nu^{|\bfs|},\sup_{\bfq\in \vect I_{2\vect m}}\frac{1}{\nu^{|\bfq|}}\sum_{\bfs\in \mathcal Z^2_+}|(A\Gamma)_{(\bfs,\bfq)}|\nu^{|\bfs|}\right\}.
\end{align*}
Let us detail the two contributions separately. Denote 
\[
B(\bfq) \bydef \frac{1}{\nu^{|\bfq|}}\sum_{\bfs\in \mathcal Z^2_+}|(A\Gamma)_{(\bfs,\bfq)}|\nu^{|\bfs|}
=\frac{1}{\nu^{|\bfq|}}\sum_{\bfs\in \mathcal Z^2_+} \Big| \sum_{\bfj\in \mathcal Z^2_+}A_{(\bfs,\bfj)}\Gamma_{(\bfj,\bfq)} \Big| \nu^{|\bfs|} .
\]

{\bf Case 1 ($\bfq\in \vect F_{2\vect m}$):} This case is further decomposed in two sub cases: $\bfq\in \bfFm$ and $\bfq\in \vect F_{2\vect m}\setminus \bfFm$.
Assume that $\bfq\in \bfFm$. Since $\Gamma_{(\bfj,\bfq)}=0$ for $(\bfj,\bfq)\in \bfFm \times \bfFm$, we have 
\begin{align}
\nonumber
B(\bfq)&=\frac{1}{\nu^{|\bfq|}}\sum_{\bfs\in \mathcal Z^2_+}\left|\sum_{\bfj\in \bfIm}A_{(\bfs,\bfj)}\Gamma_{(\bfj,\bfq)})\right|\nu^{|\bfs|}\\
\nonumber
&=\frac{1}{\nu^{|\bfq|}}\sum_{\bfs\in \mathcal Z^2_+}\left|\sum_{\bfj\in \bfIm}\mu_{\bfs}^{-1}\delta_{\bfs,\bfj}(-2\overline C_{\bfj,\bfq})\right|\nu^{|\bfs|}\\
\nonumber
&=\frac{1}{\nu^{|\bfq|}}\sum_{\bfs\in \bfIm}\left|\mu_{\bfs}^{-1}(-2\overline C_{\bfs,\bfq})\right|\nu^{|\bfs|} \\
\nonumber
&=\frac{2}{\nu^{|\bfq|}}\sum_{\bfs\in \bfIm}|\mu_{\bfs}^{-1}(\overline C_{\bfs,\bfq})|\nu^{|\bfs|} \\
\label{eq:B(q) case 1}
&=\frac{2}{\nu^{|\bfq|}}\sum_{\bfs\in \vect F_{2\vect m}\setminus \bfFm}|\mu_{\bfs}^{-1}(\overline C_{\bfs,\bfq})|\nu^{|\bfs|},
\end{align}
which is a finite sum, since for any $s\in \vect I_{2\vect m}$, we get that $\overline C_{\bfs,\bfq}=0$ for all $\bfq\in \bfFm$.

Assume now that $\bfq\in \vect F_{2\vect m}\setminus \bfFm$. In this case,
\begin{align} 
\nonumber
B(\bfq)&=\frac{1}{\nu^{|\bfq|}}\left[ \sum_{\bfs\in \bfFm} \Big| \sum_{\bfj\in \mathcal Z^2_+}A_{(\bfs,\bfj)}\Gamma_{(\bfj,\bfq)} \Big| \nu^{|\bfs|}  +\sum_{\bfs\in \bfIm} \Big| \sum_{\bfj\in \mathcal Z^2_+}A_{(\bfs,\bfj)}\Gamma_{(\bfj,\bfq)} \Big| \nu^{|\bfs|} \right]\\
\nonumber
&=\frac{1}{\nu^{|\bfq|}}\left[ \sum_{\bfs\in \bfFm} \Big| \sum_{\bfj\in \bfFm}A_{(\bfs,\bfj)}(-2\overline C_{\bfj,\bfq}) \Big| \nu^{|\bfs|}  
+\sum_{\bfs\in \bfIm} \Big| \mu_\bfs^{-1}\Gamma_{(\bfs,\bfq)} \Big| \nu^{|\bfs|} \right]\\
\nonumber
&=\frac{2}{\nu^{|\bfq|}}\left[ \sum_{\bfs\in \bfFm} \Big| \sum_{\bfj\in \bfFm}A_{(\bfs,\bfj)}\overline C_{\bfj,\bfq} \Big| \nu^{|\bfs|}  +\sum_{\bfs\in \bfIm} \Big| \mu_\bfs^{-1}\overline C_{\bfs,\bfq}\Big|\nu^{|\bfs|} \right] \\
\label{eq:B(q) case 2}
&=\frac{2}{\nu^{|\bfq|}}\left[ \sum_{\bfs\in \bfFm} \Big| \sum_{\bfj\in \bfFm}A_{(\bfs,\bfj)}\overline C_{\bfj,\bfq} \Big| \nu^{|\bfs|}  +\sum_{\bfs\in \vect F_{3\vect m}\setminus \bfFm} \Big| \mu_\bfs^{-1}\overline C_{\bfs,\bfq}\Big|\nu^{|\bfs|} \right] .
\end{align}

{\bf Case 2 ($\bfq\in \vect I_{2\vect m}$):} In this case, the only possibility for $\bfj$ giving that $\Gamma_{(\bfj,\bfq)}\neq 0$  is  that $\bfj\in \bfIm$.  For $\bfj\in \bfIm$ the operator  $A$ is {\it diagonal }, therefore non zero contributions to the sum are given only when $\bfs=\bfj\in \bfIm$. These considerations imply that 
$$
B(\bfq)=\frac{1}{\nu^{|\bfq|}}\sum_{\bfs\in \bfIm} \Big| \mu_{\bfs}^{-1}\Gamma_{(\bfs,\bfq)} \Big|\nu^{|\bfs|} 
= \frac{1}{\nu^{|\bfq|}}\sum_{\bfs\in \bfIm} \Big| \mu_{\bfs}^{-1}(-2\overline C_{\bfs,\bfq}) \Big| \nu^{|\bfs|}
= \frac{2}{\nu^{|\bfq|}}\sum_{\bfs\in \bfIm} \Big| \mu_{\bfs}^{-1}\overline C_{\bfs,\bfq} \Big| \nu^{|\bfs|}.
$$
We remind that the sequence $\bar x_\bfk$ is symmetric, that is $\bar x_{\pm k_1,\pm k_2}=\bar x_{k_1,k_2}$, and that $\bar x_\bfk=0$ for any $\bfk\in \bfIm$. Thus, denoting by
\[
\bfFm^{\pm}=\{ \bfk=(k_1,k_2)\in \Z^2: |k_1|<m_1, 0<|k_2|<m_2\},\quad \bfIm^\pm=\Z^2\setminus \bfFm^\pm
\]
we simply have that $\bar x_\bfk=0$ for any $\bfk\in \bfIm^\pm$.
 
In the situation under consideration, the only possible non zero contribution to $\overline C_{\bfs,\bfq}$ is given by $\bar x_{\bfs-\bfq}$. Indeed all the other combinations $\bfs+\bfq$ and $\bfs\pm(q_1,-q_2)$ give an index outside $\bfFm^\pm$. Thus
\[
B(\bfq)=\frac{2}{\nu^{|\bfq|}}\sum_{\bfs\in \bfIm}| \mu_{\bfs}^{-1} \bar x_{\bfs-\bfq} |\nu^{|\bfs|}\leq \frac{2}{\nu^{|\bfq|}}\sum_{\bfp\in \bfFm^\pm}| \mu_{\bfp+\bfq}^{-1}\bar x_{\bfp}|\nu^{|\bfp+\bfq|} \leq 2\sum_{\bfp\in \bfFm^\pm}| \mu_{\bfp+\bfq}^{-1}\bar x_{\bfp}|\nu^{|\bfp|}.
\]
%
%This estimate could be very rough.  Indeed we are now taking the sum over all $\bfp\in \bfFm^\pm$ without taking into account wether $\bfs=\bfp+\bfq$ is in $\bfIm$ or not.
Denote
\[
\mathcal B(\bfq) \bydef 2\sum_{\bfp\in \bfFm^\pm}| \mu_{\bfp+\bfq}^{-1}\bar x_{\bfp}|\nu^{|\bfp|}.
\]
From the previous relation, it follows that 
\[
\sup_{\bfq\in \vect I_{2\vect m}}B(\bfq)\leq \sup_{\bfq\in \vect I_{2\vect m}}\mathcal B(\bfq).
\]

Since  $\mu_\bfs^{-1}$ is {\it decreasing}, it is enough to restrict the computation of $\mathcal B(\bfq)$ to the finite set $\bfq\in \vect I_{2\vect m}\cap \vect F_{3\vect m}$, as stated in the following Lemma. Before presenting the result, let us introduce the sets $\vect R_n$, named the {\em rings}, as 
\begin{equation}\label{eq:rings}
\vect R_n\bydef \vect F_{(n+1)\vect m}\setminus \vect F_{n\vect m}=\vect I_{n\vect m}\cap \vect F_{(n+1)\vect m}.
\end{equation}

\begin{lemma} \label{lem:technical_lemma1}
If $\bfm=(m_1,m_2)$ satisfies the condition \eqref{eq:cond_m}, then
\[
\sup_{\bfq\in \vect I_{2\vect m}}\mathcal B(\bfq)=\max_{\bfq\in \vect R_{2}}\mathcal B(\bfq).
\]
\end{lemma}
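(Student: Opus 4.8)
The plan is to show that $\mathcal B(\bfq)$ is maximized over the outermost ring, i.e. over $\bfq\in \vect R_2 = \vect I_{2\vect m}\cap \vect F_{3\vect m}$, and that increasing $|\bfq|$ beyond this ring can only make $\mathcal B(\bfq)$ smaller. First I would recall that $\mathcal B(\bfq) = 2\sum_{\bfp\in\bfFm^\pm}|\mu_{\bfp+\bfq}^{-1}\bar x_\bfp|\nu^{|\bfp|}$, where the sum runs over a fixed finite index set $\bfFm^\pm$ that does not depend on $\bfq$. So the only dependence on $\bfq$ is through the weights $|\mu_{\bfp+\bfq}^{-1}|$. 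Thus it suffices to show that for each fixed $\bfp\in\bfFm^\pm$, the quantity $|\mu_{\bfp+\bfq}^{-1}|$ does not increase as $\bfq$ moves from $\vect R_2$ outward into $\vect I_{3\vect m}$; then a term-by-term comparison gives $\mathcal B(\bfq)\le \mathcal B(\bfq')$ for a suitable $\bfq'\in\vect R_2$, and since $\vect R_2$ is finite the supremum is attained and equals $\max_{\bfq\in\vect R_2}\mathcal B(\bfq)$.

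The key analytic step is therefore a monotonicity statement about $\mu_\bfk^{-1}$, or equivalently that $|\mu_\bfk|$ is increasing in an appropriate sense as $|\bfk|$ grows. Recall from \eqref{eq:new_mu_k} that $\mu_\bfk = \frac{L^2}{4\pi^2}\frac{k_1^2}{k_2^2} + 4\pi^2\lambda k_2^2 - 1$. The plan is to show that under condition \eqref{eq:cond_m}, namely $m_2\ge\max\{m_1,\,\tfrac{L}{2\pi^2\sqrt\lambda}\}$, one has $\mu_\bfk>0$ for all $\bfk\in\vect I_\bfm$ and, more importantly, that for any $\bfp\in\bfFm^\pm$ and any $\bfq\in\vect I_{2\vect m}$ with $|\bfq|$ large enough, increasing $k_2$ (and hence $|\bfk|$) only increases $\mu_\bfk$: the dominant term $4\pi^2\lambda k_2^2$ is increasing in $k_2$, and the condition on $m_2$ is exactly what guarantees that the derivative/difference in the $k_2$-direction stays positive even taking into account the behavior of the $k_1^2/k_2^2$ term. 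One should verify that for $\bfk = \bfp+\bfq$ with $\bfp\in\bfFm^\pm$ (so $|p_1|<m_1$, $0<|p_2|<m_2$) and $\bfq\in\vect R_n$ with $n\ge 2$, the index $\bfk$ has $k_2$ component large enough that $\mu_\bfk$ is positive and increasing; shifting $\bfq$ radially inward to the ring $\vect R_2$ decreases every $|\mu_{\bfp+\bfq}^{-1}|$ simultaneously.

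The main obstacle I anticipate is the two-dimensionality of the index: $\mu_\bfk$ depends on the \emph{ratio} $k_1^2/k_2^2$, so it is not simply monotone in $|\bfk|=|k_1|+|k_2|$. One must carefully track how $\bfp+\bfq$ behaves: moving $\bfq$ outward could in principle increase $k_1$ while keeping $k_2$ bounded, which would increase $\mu_\bfk$ — that is fine for a lower bound on $|\mu_\bfk|$ but one must make sure the comparison goes the right way, i.e. that \emph{every} $\bfq\in\vect I_{2\vect m}$ can be matched with some $\bfq'\in\vect R_2$ having $|\mu_{\bfp+\bfq'}|\le|\mu_{\bfp+\bfq}|$ for all $\bfp$ at once. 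The clean way to handle this is to fix the ``angular'' direction of $\bfq$ and slide it radially inward to $\vect R_2$, checking that along this radial path both $k_1$ and $k_2$ of $\bfp+\bfq$ decrease (or at least that $\mu$ decreases), which is where condition \eqref{eq:cond_m} enters: it ensures $m_2$ is large enough relative to $m_1$ and to $L/(2\pi^2\sqrt\lambda)$ that the $4\pi^2\lambda k_2^2$ term dominates and controls the sign of the relevant differences. I would also double-check the edge case where $\bfq$ has a small second component by invoking that $\bfq\in\vect I_{2\vect m}$ forces either $q_1\ge 2m_1$ or $q_2\ge 2m_2$, and treat these two regimes separately.
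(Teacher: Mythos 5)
Your proposal is correct and follows essentially the same route as the paper: reduce to a term-by-term comparison of the weights $\mu_{\bfp+\bfq}^{-1}$, match each $\bfq$ outside $\vect R_2$ with some $\hat\bfq\in\vect R_2$ satisfying $\mu_{\bfp+\hat\bfq}\le\mu_{\bfp+\bfq}$ for all $\bfp\in\bfFm^\pm$ simultaneously, split into the regimes ``$q_1$ large'' and ``$q_2$ large'', and use condition \eqref{eq:cond_m} to show the $4\pi^2\lambda k_2^2$ term dominates when $q_2$ is decremented. The paper makes your ``radial sliding'' precise by decomposing $\vect I_{3\vect m}=\bigcup_{n\ge 3}\vect R_n$, shifting $\bfq$ by $(m_1,0)$ or $(0,m_2)$ one ring at a time, and inducting on $n$ with the inequality $m_2^2\ge\frac{L^2}{16\pi^4\lambda}\frac{(n+1)^2}{(n-2)^2(n-1)^2}$, which is exactly the quantitative form of the difference estimate you describe.
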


\begin{proof}
First note that $\mu_{\bfp+\bfq}^{-1}$ is positive for $\bfq\in \vect I_{2\vect m}$ and $\bfp\in \bfFm^\pm$.
Therefore 
\[
\mathcal B(\bfq)=2\sum_{\bfp\in \bfFm^\pm} \mu_{\bfp+\bfq}^{-1}|\bar x_{\bfp}|\nu^{|\bfp|}.
\]
The proof follows by showing that for any choice of $\bfq\in \vect I_{3\vect m}$ there exists  $\hat\bfq\in \vect R_2$
such that
\begin{equation}
\label{eq:inequality_mu}
\mu_{\bfp+\hat\bfq} \leq \mu_{\bfp+\bfq}, \quad \text{for all } \bfp\in \bfFm^\pm.
\end{equation}
Indeed, from \eqref{eq:inequality_mu} we conclude that  $\mathcal B(\hat \bfq)\geq \mathcal B(\bfq)$,
and then the result follows from $\vect{I}_{2 \vect {m}} =  \vect{I}_{3 \vect {m}} \cup \vect{R}_{2}$.

In order to show \eqref{eq:inequality_mu} let us consider the \emph{ring}-like decomposition
\[
\vect I_{3\vect m}=\bigcup_{n\geq 3}\vect R_{n}.
\]
We prove the statement for $\bfq \in \vect R_3$ and then by induction we extend it to $\vect R_n$,
$n\geq 4$. We introduce the two disjoints set of indices
\begin{align*}
A_+ & \bydef \{\bfq=(q_1,q_2) \in \mathcal Z^2_+ : q_1\in [3m_1, 4m_1), q_2\in(0, 4m_2)\} \subset \vect R_3
\\
B_+ & \bydef \{\bfq=(q_1,q_2) \in \mathcal Z^2_+ : q_1\in [0, 3m_1), q_2\in[3m_2, 4m_2)\} \subset \vect R_3
\end{align*}
so that 
\[
\vect R_3  = \vect F_{4\vect m}\setminus \vect F_{3\vect m} = A_+ \cup B_+.
\]
Let $\bfq \in \vect R_3$. Suppose first that $\bfq\in B_+$, and define $\hat \bfq=(q_1,q_2-m_2) \in \vect R_2$.
For any $\bfp\in \bfFm^\pm$ 
\[
\begin{aligned}
\mu_{\bfp+\bfq}-\mu_{\bfp+\hat \bfq} & =
\frac{L^2}{4\pi^2} (p_1+q_1)^2 \frac{(p_2+q_2-m_2)^2-(p_2+q_2)^2}{(p_2+q_2-m_2)^2(p_2+q_2)^2} +
4\pi^2 \lambda \Big((p_2+q_2)^2 - (p_2+q_2-m_2)^2\Big) \\
& = \frac{L^2}{4\pi^2} (p_1+q_1)^2 \frac{m_2^2 - 2m_2(p_2+q_2)}{(p_2+q_2-m_2)^2(p_2+q_2)^2} +
4\pi^2\lambda(2m_2 (p_2+q_2) - m_2^2) \\
& = m_2 (2(p_2+q_2)-m_2) \left[ 4\pi^2\lambda-\frac{L^2}{4\pi^2}
\frac{(p_1+q_1)^2}{(p_2+q_2-m_2)^2(p_2+q_2)^2}\right].
\end{aligned}
\]
From \eqref{eq:cond_m} we get that $m_2^2  \geq \frac{L^2}{4\pi^4\lambda}$ and $m_2 \geq m_1$,
and so
\begin{equation}
\label{eq:technical_ineq1}
4 \pi^2 \lambda - \frac{L^2}{4\pi^2} \frac{(p_1+q_1)^2}{(p_2+q_2-m_2)^2(p_2+q_2)^2} \geq
4 \pi^2 \lambda - \frac{L^2}{4\pi^2} \frac{(4m_1)^2}{(m_2)^2(2m_2)^2} \geq 0.
\end{equation}
Combining \eqref{eq:technical_ineq1} with the fact that $2(p_2+q_2) - m_2 > 0$, it follows that
$\mu_{\bfp+\bfq}-\mu_{\bfp+\hat \bfq} \geq 0$. 

Suppose now that $\bfq\in A_+$, and define $\tilde \bfq=(q_1-m_1,q_2)$. Then we have
\begin{align*}
\mu_{\bfp+\tilde \bfq} & = \frac{L^2}{4\pi^2} \frac{(p_1+\tilde q_1)^2}{(p_2+\tilde q_2)^2} +
4 \pi^2\lambda (p_2+\tilde q_2)^2-1 \\
& = \frac{L^2}{4\pi^2} \frac{(p_1+q_1-m_1)^2}{(p_2+ q_2)^2} + 
4 \pi^2\lambda (p_2+ q_2)^2-1 < \mu_{\bfp+\bfq}.
\end{align*}
If $\tilde \bfq \in \vect R_2$, define $\hat \bfq=\tilde \bfq $, and so $\mu_{\bfp+\bfq} > \mu_{\bfp+\hat\bfq}$.
Otherwise if $\tilde \bfq \not \in \vect R_2$ then necessarily $\tilde \bfq\in B_+$.
Thus, define $\hat \bfq = (\tilde q_1, \tilde q_2 - m_2) = (q_1 - m_1, q_2 - m_2) \in \vect R_2$. Proceeding
as above, we get that $\mu_{\bfp+\bfq} > \mu_{\bfp+\tilde\bfq} \geq \mu_{\bfp+\hat\bfq}$.
We then conclude that for any $\bfq\in \vect R_3$ the statement holds, that is, for every $\bfq \in \vect R_3$
there exists $\hat\bfq\in \vect R_2$ such that $\mu_{\bfp+\hat\bfq} \leq \mu_{\bfp+\bfq}$, for all $\bfp\in \bfFm^\pm$.

By induction on $n$ it follows that for any $\bfq\in \vect R_n$ there exists a $\hat \bfq\in \vect R_{n-1}$ so that
$\mu_{\bfp+\hat\bfq} \leq \mu_{\bfp+\bfq}$ for any $\bfp\in \bfFm^\pm$ provided that
$m_2^2 \geq \frac{L^2}{16\pi^4 \lambda}\frac{(n+1)^2}{(n-2)^2(n-1)^2}$, which ensures that an inequality
analogous to \eqref{eq:technical_ineq1} holds. Since the function $g(n) \bydef \frac{(n+1)^2}{(n-2)^2(n-1)^2}$
is decreasing in $n$, condition \eqref{eq:cond_m} guarantees that
$m_2^2 \geq \frac{L^2}{4 \pi^4\lambda}g(n) > \frac{L^2}{16\pi^4\lambda}g(n) $ for any $n\geq 4$.
\end{proof}

Set
\[
B^{(1)} \bydef \max_{\bfq\in \vect F_{2\vect m}} B(\bfq), \qquad
B^{(2)} \bydef \max_{\bfq\in \vect F_{2\vect m}\setminus \bfFm} B(\bfq) \quad \text{and} \quad
B^{(3)} \bydef \max_{\bfq\in \vect R_{2}}\mathcal B(\bfq).
\]
Using formulas \eqref{eq:B(q) case 1} and \eqref{eq:B(q) case 2} and the result of Lemma~\ref{lem:technical_lemma1}, we set
\begin{equation} \label{eq:Z1}
Z_1 
=\max \left\{B^{(1)}, B^{(2)},B^{(3)} \right\},
\end{equation}
which is obtained via a finite computation.  \\

\noindent \underline{\bf Computation of \boldmath $Z_2$\unboldmath} \\ 

Recalling the definition of the rings \eqref{eq:rings} and   arguing as in the proof of Lemma~\ref{lem:technical_lemma1},
it is easy to prove the following.
\begin{lemma} \label{lem:technical_lemma2}
If $\bfm=(m_1,m_2)$ satisfies \eqref{eq:cond_m}, then 
\[
||A||\leq \max\{|| A^{(\bfm)}||, \max_{\bfk\in \vect R_1}\mu_\bfk^{-1}\}.
\]
\end{lemma}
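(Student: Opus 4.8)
The plan is to read off the operator norm $\|A\|$ directly from the matrix representation of $A$ via formula \eqref{eq:oper_norm}, exploiting the block structure in \eqref{eq:A}. First I would split the supremum over columns $\bfj\in\mathcal Z^2_+$ according to whether $\bfj$ lies in the finite block $\bfFm$ or in the tail $\bfIm$, so that
\[
\|A\| = \max\left\{\, \sup_{\bfj\in\bfFm}\frac{1}{\nu^{|\bfj|}}\sum_{\bfk\in\mathcal Z^2_+}|A_{(\bfk,\bfj)}|\,\nu^{|\bfk|}, \ \ \sup_{\bfj\in\bfIm}\frac{1}{\nu^{|\bfj|}}\sum_{\bfk\in\mathcal Z^2_+}|A_{(\bfk,\bfj)}|\,\nu^{|\bfk|} \,\right\}.
\]

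For a column $\bfj\in\bfFm$, definition \eqref{eq:A} gives $A_{(\bfk,\bfj)}=A^{(\bfm)}_{(\bfk,\bfj)}$ for $\bfk\in\bfFm$ and $A_{(\bfk,\bfj)}=0$ for $\bfk\in\bfIm$ (the only entries of $A$ outside the $\bfFm\times\bfFm$ block are the diagonal entries $\mu_\bfk^{-1}$ with $\bfk\in\bfIm$, and none of these sits in a column indexed by $\bfFm$); hence the $\bfj$-th column weight of $A$ equals that of $A^{(\bfm)}$, and the first supremum is exactly $\|A^{(\bfm)}\|$. For a column $\bfj\in\bfIm$ the only nonzero entry is $A_{(\bfj,\bfj)}=\mu_\bfj^{-1}$, so its column weight is $|\mu_\bfj^{-1}|$, and the second supremum equals $\sup_{\bfj\in\bfIm}|\mu_\bfj^{-1}|$. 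The lemma therefore reduces to showing that this tail supremum is attained on the first ring, i.e. $\sup_{\bfj\in\bfIm}|\mu_\bfj^{-1}| = \max_{\bfk\in\vect R_1}\mu_\bfk^{-1}$.

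To prove the reduced statement I would replay the inductive ring-shrinking argument of Lemma~\ref{lem:technical_lemma1}, specialized to the trivial shift point $\bfp=\vect{0}$, which lightens all the bookkeeping since no auxiliary sequence is present. With the decomposition $\bfIm=\bigcup_{n\geq 1}\vect R_n$ from \eqref{eq:rings}, it suffices to show that for every $n\geq 2$ and every $\bfq\in\vect R_n$ there exists $\hat\bfq\in\vect R_{n-1}$ with $0<\mu_{\hat\bfq}\leq\mu_\bfq$; iterating down to the finite set $\vect R_1$ then gives $\mu_\bfk^{-1}\leq\max_{\vect R_1}\mu_\bfk^{-1}$ for every $\bfk\in\bfIm$ and turns the supremum into a maximum. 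The inequality $\mu_{\hat\bfq}\leq\mu_\bfq$ is the $\bfp=\vect{0}$ case of \eqref{eq:technical_ineq1}: when the coordinate of $\bfq$ outside the block is $q_2$, set $\hat\bfq=(q_1,q_2-m_2)$ and compute
\[
\mu_\bfq-\mu_{\hat\bfq}=m_2\bigl(2q_2-m_2\bigr)\left[\,4\pi^2\lambda-\frac{L^2}{4\pi^2}\,\frac{q_1^2}{(q_2-m_2)^2\,q_2^2}\,\right]\geq 0,
\]
the bracket being nonnegative by condition \eqref{eq:cond_m}; when instead the coordinate outside the block is $q_1$, set $\hat\bfq=(q_1-m_1,q_2)$, which strictly lowers $\mu$ with no hypothesis needed; and the generic step $\vect R_n\to\vect R_{n-1}$ for $n\geq 3$ is handled exactly as in Lemma~\ref{lem:technical_lemma1}, using that $g(n)=\frac{(n+1)^2}{(n-2)^2(n-1)^2}$ is decreasing so that \eqref{eq:cond_m} suffices for all $n$ simultaneously.

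I expect the only genuine work to be this monotonicity step — checking that an inward shift by $(m_1,0)$ or $(0,m_2)$ never increases $\mu$ — together with the minor point that $\mu_\bfk>0$ throughout $\bfIm$, which is needed anyway for $A$ (and the reference operator $A^\dagger$) to be well defined and is exactly what makes the right-hand side $\max_{\bfk\in\vect R_1}\mu_\bfk^{-1}$ positive. Both are the $\bfp=\vect{0}$ instances of computations already carried out in full in the proof of Lemma~\ref{lem:technical_lemma1}, which is why the statement is indeed "easy" once that lemma is in hand.
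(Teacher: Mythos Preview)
Your proposal is correct and follows essentially the same route as the paper: split the operator norm over $\bfFm$ and $\bfIm$ to reduce to $\sup_{\bfk\in\bfIm}\mu_\bfk^{-1}$, then run the ring-shrinking argument of Lemma~\ref{lem:technical_lemma1} with $\bfp=\vect{0}$ to push this supremum down to $\vect R_1$. The only cosmetic difference is that, once $\bfp=\vect{0}$, the index ranges shift by one compared to Lemma~\ref{lem:technical_lemma1} (the base step is $\vect R_2\to\vect R_1$ rather than $\vect R_3\to\vect R_2$), so the decreasing function governing the inductive condition is not literally $g(n)=\frac{(n+1)^2}{(n-2)^2(n-1)^2}$ but a simpler one (the paper writes $g(n)=1/n$); either way \eqref{eq:cond_m} covers the worst case, so this does not affect the argument.
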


\begin{proof}
From the definition of $A$ and formula  \eqref{eq:oper_norm} it follows that
\[
||A||\leq \max\{ ||A^{(\bfm)}||, \sup_{\bfk\in \bfIm}\mu_\bfk^{-1}\}.
\]
First we show that for any $\bfk\in \vect R_2$ there exists a $\widehat \bfk\in \vect R_1$ so that $\mu_\bfk \geq \mu_{\widehat \bfk}$. 
Following the same arguments as in the proof of Lemma~\ref{lem:technical_lemma1}, setting $p_1=p_2=0$ and with
the obvious adaptation, it follows that $\mu_\bfk \geq \mu_{\widehat \bfk}$ provided that 
\[
4\pi^2\lambda-\frac{L^2}{4\pi^2}\frac{k_1^2}{(k_2-m_2)^2k_2^2} \geq 0, \quad
\text{for all } (k_1,k_2)\in B_+ \bydef [0,2m_1)\times [2m_2,3m_2). 
\]
Now for any $(k_1,k_2)\in B_+$ we have
\[
4\pi^2\lambda-\frac{L^2}{4\pi^2}\frac{k_1^2}{(k_2-m_2)^2k_2^2} \geq
4\pi^2\lambda-\frac{L^2}{4\pi^2} \frac{m_1^2}{m_2^4} \geq 0
\]
whenever $m_2^2 \geq \frac{L^2}{16\pi^4 \lambda}$ and $m_2 \geq m_1$, which is guaranteed
by assumption \eqref{eq:cond_m}. Then, by induction, since $g(n)=1/n$ is decreasing, for any
$\bfk\in \vect R_n$, there exists a sequence $\widehat{\bfk}_1,\dots, \widehat{\bfk}_{n-1}$, with
$\widehat\bfk_{i}\in \vect R_{n-i}$, such that
\[
\mu_{\bfk} \geq \mu_{\widehat \bfk_1} \geq \dots \geq \mu_{\widehat\bfk_{n-1}}.
\]
\end{proof}
Therefore, using Lemma~\ref{lem:technical_lemma2}, we set
\begin{equation} \label{eq:Z2} 
Z_2  \bydef 32 \max\left\{|| A^{(\bfm)}||, \max_{\bfk\in \vect R_1}\mu_\bfk^{-1}\right\},
\end{equation}
which is obtained via a finite computation.

Combining the bounds $Y$, $Z_0$, $Z_1$ and $Z_2$ given respectively by \eqref{eq:Y}, \eqref{eq:Z0}, \eqref{eq:Z1} and \eqref{eq:Z2}, we have explicitly constructed the radii polynomial $p(r)$ as defined in \eqref{eq:radii_polynomial}.
  
\section{Results}
\label{sec:results}

In this section we present several computer-assisted proofs of existence of periodic orbits of
\eqref{eq:boussinesq}. For the results presented in this section we started with four numerical approximations
of periodic orbits corresponding to a small value of $\lambda$ and applied a numerical continuation algorithm
to each one of these solutions to get several numerical solutions along a branch of solutions. These numerically
computed branches are plotted in Figures~\ref{fig:solns_branch1}-\ref{fig:solns_branch4}. We selected three
numerical solutions along each branch and applied our rigorous method to prove the existence of a true
periodic orbit to the Boussinesq equation \eqref{eq:boussinesq} close to these numerical solutions.
The points along each branch for which we produced a computer-assisted proof, as well as a plot of each
numerical solution, are presented in Figures~\ref{fig:solns_branch1}-\ref{fig:solns_branch4}. The three points
in each branch were selected using the following criteria: The first point was selected at the beginning of the
branch in order for the proof to succeed with small values of $m_1$ and $m_2$, and hence for the verification
code to run fast; the second point was chosen with the aim of getting a small value of $r$ in the proof while
having $\lambda$ not so small and $m_1$ and $m_2$ not so large; the third point was selected with the aim
of maximizing $\lambda$ while not having $m_1$ and $m_2$ too large. The values of these parameters, as
well as the running time for the proof, are presented in Figures~\ref{fig:solns_branch1}-\ref{fig:solns_branch4}
for each one of the solutions that were proved to exist. All the numerical data, as well as the code to perform
the proofs, are presented in \cite{webpage_boussinesq}.

For all the proofs presented in this paper we used $L=2\pi$  ($1$-periodic in space), $m_1 = m_2$, and $\nu = 1.01$
as the decay rate. The proofs are made rigorous by using the interval arithmetic package \textsf{INTLAB}~\cite{Ru99a}.

\begin{theorem} \label{thm:solns_branch1}
Let $\lambda \in \{ 0.1446, 0.2346, 1.0846 \}$ and consider the corresponding numerical approximation
$\bar{u}(t,y)$ depicted Figure~\ref{fig:solns_branch1}. Then there exists a classical periodic solution $u(t,y)$
of \eqref{eq:boussinesq} having $C^0$- and $L^2$-error bounds both equal to $4r$, where the value of $r$
is presented in Figure~\ref{fig:solns_branch1}.
\end{theorem}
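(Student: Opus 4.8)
The plan is to assemble the four bounds $Y$, $Z_0$, $Z_1$, and $Z_2$ constructed in Section~\ref{sec:rigorous_computation} into the radii polynomial $p(r)$ of \eqref{eq:radii_polynomial} and then, for each of the three listed values of $\lambda$, invoke Lemma~\ref{lem:rad_poly}. Concretely, for a fixed $\lambda \in \{0.1446, 0.2346, 1.0846\}$ and the associated numerical data $(\bar c_{0,0}, \bar x)$ coming from the continuation runs (with $L = 2\pi$, $m_1 = m_2$, $\nu = 1.01$), I would first verify by a finite interval-arithmetic computation that the matrix $A^{(\bfm)}$ is injective; since $A$ differs from $A^{(\bfm)}$ only by the diagonal tail $\mu_\bfk^{-1}$ on $\bfIm$, and condition \eqref{eq:cond_m} guarantees $\mu_\bfk \neq 0$ there (indeed $\mu_\bfk > 0$ by the same estimate used in Lemma~\ref{lem:technical_lemma2}), injectivity of $A^{(\bfm)}$ yields injectivity of $A$. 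Then I would evaluate the explicit finite formulas \eqref{eq:Y}, \eqref{eq:Z0}, \eqref{eq:Z1}, \eqref{eq:Z2} in \textsf{INTLAB}, form $p(r)$, and exhibit an $r > 0$ with $p(r) < 0$; Lemma~\ref{lem:rad_poly} then produces a unique $\tx \in B_r(\bx)$ with $f(\tx) = 0$.

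It remains to translate $f(\tx) = 0$ back into a genuine classical solution of \eqref{eq:boussinesq} and to convert the $X$-norm error bound $\|\tx - \bx\|_\nu \le r$ into the claimed $C^0$- and $L^2$-error bounds of $4r$. For the first point, I would observe that $c(\tx) = \bar c_{0,0} + {\rm sym}(\tx) \in \cX$ satisfies the full symmetry \eqref{eq:symmetry}, that $f_\bfk(\tx) = 0$ for all $\bfk \in \mathcal Z^2_+$ is equivalent (after multiplying back by $4\pi^2 k_2^2$, and using $c_{k_1,0} = 0$, $h_{0,k_2}$ encoding the $W$-conservation, $h_{k_1,0} = 0$) to $h_\bfk(c(\tx)) = 0$ for all $\bfk \in \Z^2$, i.e. that $u(t,y) = \sum_{\bfk} c(\tx)_\bfk \psi_\bfk(t,y)$ formally solves the Fourier-transformed Boussinesq equation. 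Because $\tx$ lies in the weighted space with $\nu = 1.01 > 1$, the coefficients decay geometrically, so the series and all the series obtained by applying $\partial_{tt}$, $\partial_{yy}$, $\partial_{yyyy}$ converge uniformly; hence $u$ is real-analytic in both variables, $\frac{2\pi}{L}$-periodic in $t$, $1$-periodic and even in $y$, and a classical solution of \eqref{eq:boussinesq}.

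For the error bounds, note $u - \bar u = \sum_{\bfk} ({\rm sym}(\tx) - {\rm sym}(\bx))_\bfk \psi_\bfk$, so by the triangle inequality $\|u - \bar u\|_{C^0} \le \sum_{\bfk \in \Z^2} |({\rm sym}(\tx - \bx))_\bfk| \le \|{\rm sym}(\tx - \bx)\|_\nu^* \le 4\|\tx - \bx\|_\nu \le 4r$ using $\nu > 1$ and Lemma~\ref{lem:norm_comparisons}; the $L^2$ bound over the fundamental space-time domain follows identically, since the $\psi_\bfk$ are orthonormal and hence the $L^2$-norm is dominated by the $\ell^1$-norm of the coefficients, again giving the factor $4$. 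The main obstacle in writing this up is not any single estimate but bookkeeping: one must make the identification between the reduced unknowns in $X$ (indices in $\mathcal Z^2_+$, with $c_{0,0}$ fixed) and the full bi-infinite symmetric sequence in $\cX$ completely precise, checking that no equation is lost when passing from $\{f_\bfk = 0 : \bfk \in \mathcal Z^2_+\}$ to $\{h_\bfk = 0 : \bfk \in \Z^2\}$ — in particular that the rows $h_{k_1,0}$ and $h_{0,0}$ are automatically satisfied by construction (the former because $c_{k_1,0} = 0$, the latter trivially since $h_{0,0} \equiv 0$), so that the augmented energy equation used only for the numerics plays no role in the final existence claim.
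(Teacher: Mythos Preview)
Your proposal is correct and takes essentially the same approach as the paper: compute $Y$, $Z_0$, $Z_1$, $Z_2$ in \textsf{INTLAB}, form $p(r)$, exhibit $r>0$ with $p(r)<0$, apply Lemma~\ref{lem:rad_poly}, and translate the resulting $X$-norm bound into $C^0$- and $L^2$-bounds via Lemma~\ref{lem:norm_comparisons}. You are in fact more thorough than the paper on the injectivity of $A$ and on the passage from $f(\tx)=0$ to a classical solution, both of which the paper handles tersely (``by construction'') and whose error-bound translation it defers to the closing subsection.
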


\begin{proof}
Given the three numerical approximations at $\lambda \in \{ 0.1446, 0.2346, 1.0846 \}$,
for each of the corresponding numerical approximations, 
the MATLAB script {\tt script\_proof\_theorem\_1.m} computes the coefficients 
$Y$, $Z_0$, $Z_1$ and $Z_2$ given respectively by \eqref{eq:Y}, \eqref{eq:Z0}, \eqref{eq:Z1} and \eqref{eq:Z2} and it verifies with \textsf{INTLAB} (interval arithmetic in \textsf{MATLAB}) the existence of an interval $\cI=(r_{\rm min},r_{\rm max})$ such that for each $r \in \cI$, $p(r)<0$, with $p(r)$ the radii polynomial as defined in \eqref{eq:radii_polynomial}.
By Lemma~\ref{lem:rad_poly}, there exists a unique $\tx \in B_r(\bx)$ such that $f(\tx)=0$, with $f$ given component-wise in \eqref{eq:f_k}. By construction, this corresponds to a periodic orbits of the Boussinesq equation \eqref{eq:boussinesq}.
\end{proof}

\begin{theorem}
\label{thm:solns_branch2}
Let $\lambda \in \{ 0.1596, 0.2796, 0.2846 \}$ and consider the corresponding numerical approximation
$\bar{u}(t,y)$ depicted Figure~\ref{fig:solns_branch2}. Then there exists a classical periodic solution $u(t,y)$
of \eqref{eq:boussinesq} having $C^0$- and $L^2$-error bounds both equal to $4r$, where the value of $r$
is presented in Figure~\ref{fig:solns_branch2}.
\end{theorem}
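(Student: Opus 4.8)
The argument is identical in structure to the proof of Theorem~\ref{thm:solns_branch1}; only the input data change. The plan is, for each of the three numerical approximations along the second branch (at $\lambda \in \{0.1596, 0.2796, 0.2846\}$, with $L = 2\pi$, $m_1 = m_2$ chosen large enough that $\bfm$ satisfies condition~\eqref{eq:cond_m}, and $\nu = 1.01$), to run the verification routine that assembles an approximate inverse $A^{(\bfm)}$ of the finite Jacobian $D^{(\bfm)}f(\bx)$ and then evaluates the four bounds $Y$, $Z_0$, $Z_1$, $Z_2$ via the closed-form expressions~\eqref{eq:Y}, \eqref{eq:Z0}, \eqref{eq:Z1}, \eqref{eq:Z2}. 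Every operation in this step --- the finite convolutions entering $f_\bfk(\bx)$ and the coefficients $\overline C_{\bfk,\bfj}$, the matrix products with $A^{(\bfm)}$, and the evaluations of $\mu_\bfk^{-1}$ --- is carried out in \textsf{INTLAB} interval arithmetic, so the returned numbers are rigorous upper bounds for the analytic quantities.

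With these coefficients in hand, one forms the radii polynomial $p(r) = Y + (Z_0 + Z_1 - 1)r + Z_2 r^2$ of~\eqref{eq:radii_polynomial} and exhibits an interval $\cI = (r_{\min}, r_{\max})$ of positive radii on which $p(r) < 0$; since $p$ is an explicit quadratic with rigorously enclosed coefficients, this amounts to checking that its discriminant is positive and evaluating $p$ at interval points inside $\cI$. One must also verify the hypothesis of Lemma~\ref{lem:rad_poly}, namely that the operator $A$ of~\eqref{eq:A} is injective: this follows once $A^{(\bfm)}$ is checked to be invertible, because the remaining (tail) part of $A$ is diagonal with entries $\mu_\bfk^{-1}$, which are nonzero for $\bfk \in \bfIm$ thanks to~\eqref{eq:cond_m}. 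Lemma~\ref{lem:rad_poly} then produces, for each data point and each $r \in \cI$, a unique $\tx \in B_r(\bx) \subset X$ with $f(\tx) = 0$.

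It remains to translate this into a statement about the Boussinesq equation. The pair $(\bar c_{0,0}, \tx)$ defines via $c(\tx) = \bar c_{0,0} + {\rm sym}(\tx) \in \cX$ a bi-infinite even/even array of Fourier coefficients solving $h_\bfk = 0$ for every $\bfk \in \Z^2$; since $\nu > 1$ and $\|c(\tx)\|_\nu^* < \infty$, the Fourier series $u(t,y) = \sum_\bfk c(\tx)_\bfk\,\psi_\bfk(t,y)$ and all its derivatives converge absolutely and uniformly, so $u$ is a classical $\frac{2\pi}{L}$-time-periodic, $1$-space-periodic solution of~\eqref{eq:boussinesq}. For the error bounds, note that $c(\tx) - c(\bx) = {\rm sym}(\tx) - {\rm sym}(\bx)$ (the $\bar c_{0,0}$ terms cancel), so Lemma~\ref{lem:norm_comparisons} gives $\|c(\tx) - c(\bx)\|_\nu^* \le 4\|\tx - \bx\|_\nu \le 4r$. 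Because $\nu^{|\bfk|} \ge 1$, the weighted $\ell^1$ norm dominates both the $C^0$-norm and the $L^2$-norm (over one space-time period) of the corresponding function, whence $\|u - \bar u\|_{C^0} \le 4r$ and $\|u - \bar u\|_{L^2} \le 4r$, with $r$ as reported in Figure~\ref{fig:solns_branch2}.

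The proof contains no remaining mathematical obstacle: once the analytic bounds of Section~\ref{sec:rigorous_computation} are in place, everything reduces to a finite validated computation. The only genuine difficulty is a quantitative one --- making that computation succeed. The truncation size $\bfm$ must be large enough, and the numerical approximation accurate enough, that $Z_0 + Z_1 < 1$ and the defect $Y$ is small relative to $Z_2$, so that $p$ attains negative values; since increasing $\lambda$ forces a larger $\bfm$ through~\eqref{eq:cond_m} and degrades the conditioning of $D^{(\bfm)}f(\bx)$, the choice of the three sample points along the branch (obtained by numerical continuation) is precisely what keeps the verification feasible.
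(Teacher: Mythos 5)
Your proposal is correct and follows essentially the same route as the paper: run the validated computation of $Y$, $Z_0$, $Z_1$, $Z_2$ in interval arithmetic for each of the three approximations, find $r$ with $p(r)<0$, invoke Lemma~\ref{lem:rad_poly}, and convert the $\ell^1_\nu$ enclosure into $C^0$- and $L^2$-error bounds of $4r$ exactly as in the paper's final subsection. Your explicit remarks on the injectivity of $A$ and on the absolute convergence of the Fourier series are details the paper leaves implicit, but they do not constitute a different argument.
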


\begin{proof}
The proof is similar as the proof of Theorem~\ref{thm:solns_branch1}, and is done by running the
MATLAB script {\tt script\_proof\_theorem\_2.m}.
\end{proof}

\begin{theorem}
\label{thm:solns_branch3}
Let $\lambda \in \{ 0.1846, 0.2746, 0.2796 \}$ and consider the corresponding numerical approximation
$\bar{u}(t,y)$ depicted Figure~\ref{fig:solns_branch3}. Then there exists a classical periodic solution $u(t,y)$
of \eqref{eq:boussinesq} having $C^0$- and $L^2$-error bounds both equal to $4r$, where the value of $r$
is presented in Figure~\ref{fig:solns_branch3}.
\end{theorem}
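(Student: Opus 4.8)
The plan is to run, for the branch of Figure~\ref{fig:solns_branch3}, the very same computer-assisted scheme used in the proof of Theorem~\ref{thm:solns_branch1}. For each of the three values $\lambda \in \{0.1846, 0.2746, 0.2796\}$ I would load the corresponding numerical solution $\{\bar c_{0,0},\bar x\}$, fix $L=2\pi$ and $\nu=1.01$, and choose the discretization parameter $\bfm=(m_1,m_2)$ with $m_1=m_2$ large enough that the truncation condition~\eqref{eq:cond_m}, namely $m_2\ge\max\{m_1,\,L/(2\pi^2\sqrt{\lambda})\}$, is satisfied. I would then form an approximate inverse $A^{(\bfm)}$ of the finite Jacobian $D^{(\bfm)}f(\bx)$ and extend it to the operator $A$ via~\eqref{eq:A}, together with the operator $A^\dag$.

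Next I would compute, rigorously in interval arithmetic with \textsf{INTLAB}, the four bounds $Y$, $Z_0$, $Z_1$ and $Z_2$ from the closed-form finite expressions~\eqref{eq:Y}, \eqref{eq:Z0}, \eqref{eq:Z1} and~\eqref{eq:Z2}; the last two are legitimate finite computations precisely because Lemmas~\ref{lem:technical_lemma1} and~\ref{lem:technical_lemma2} reduce the relevant suprema over $\bfIm$ to maxima over the rings $\vect R_2$ and $\vect R_1$. From these I would assemble the radii polynomial $p(r)=Y+(Z_0+Z_1-1)r+Z_2r^2$ of~\eqref{eq:radii_polynomial} and verify, again in interval arithmetic, that there is an interval $\cI=(r_{\min},r_{\max})$ of radii on which $p(r)<0$; this in particular forces $Z_0<1$, hence the invertibility of $A^{(\bfm)}$ and the injectivity of $A$, so the hypothesis of Lemma~\ref{lem:rad_poly} is met. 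Picking any $r\in\cI$ and applying Lemma~\ref{lem:rad_poly} yields a unique $\tx\in B_r(\bx)$ with $f(\tx)=0$, where $f$ is given componentwise by~\eqref{eq:f_k}.

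It then remains to translate this back to the PDE. The sequence $c(\tx)=\bar c_{0,0}+{\rm sym}(\tx)$ furnishes the Fourier coefficients of a function $u(t,y)$ that is $\tfrac{2\pi}{L}$-periodic in time, $1$-periodic in space, even in both variables, and solves~\eqref{eq:boussinesq}; since $\tx\in X$ with $\nu>1$, these coefficients decay geometrically, so $u$ is real-analytic and in particular a classical solution. Finally $u-\bar u$ has Fourier coefficients ${\rm sym}(\tx-\bx)$, so Lemma~\ref{lem:norm_comparisons} together with $\nu>1$ gives $\|u-\bar u\|_{C^0}\le\|{\rm sym}(\tx-\bx)\|_\nu^*\le 4\|\tx-\bx\|_\nu\le 4r$, and the same bound holds in $L^2$ because the $\ell^2$ norm of a sequence is dominated by its $\ell^1$ norm; this is exactly the claimed error bound $4r$, with the concrete value of $r$ recorded in Figure~\ref{fig:solns_branch3}. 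Concretely this is carried out by the script {\tt script\_proof\_theorem\_3.m}.

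The main obstacle is not conceptual but quantitative: for each $\lambda$ on the branch one must pick $\bfm$ large enough (and the starting numerical approximation accurate enough) that $Z_0+Z_1<1$ and $Y$ is small, so that $p$ actually dips below zero; and one must respect~\eqref{eq:cond_m} so that the tail estimates of Lemmas~\ref{lem:technical_lemma1}–\ref{lem:technical_lemma2} apply. Beyond tuning these parameters, the argument is identical to that of Theorem~\ref{thm:solns_branch1}.
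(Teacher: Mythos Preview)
Your proposal is correct and follows exactly the paper's approach: the paper's own proof simply states that the argument is identical to that of Theorem~\ref{thm:solns_branch1} and is carried out by running {\tt script\_proof\_theorem\_3.m}. Your write-up is a faithful (and more detailed) expansion of this, invoking the same bounds \eqref{eq:Y}--\eqref{eq:Z2}, Lemma~\ref{lem:rad_poly}, and the $C^0$/$L^2$ error estimates via Lemma~\ref{lem:norm_comparisons}.
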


\begin{proof}
The proof is similar as the proof of Theorem~\ref{thm:solns_branch1}, and is done by running the
MATLAB script {\tt script\_proof\_theorem\_3.m}.
\end{proof}

\begin{theorem}
\label{thm:solns_branch4}
Let $\lambda \in \{ 0.1356, 0.1446, 0.2146 \}$ and consider the corresponding numerical approximation
$\bar{u}(t,y)$ depicted Figure~\ref{fig:solns_branch4}. Then there exists a classical periodic solution $u(t,y)$
of \eqref{eq:boussinesq} having $C^0$- and $L^2$-error bounds both equal to $4r$, where the value of $r$
is presented in Figure~\ref{fig:solns_branch4}.
\end{theorem}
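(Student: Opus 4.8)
The plan is to run, \emph{mutatis mutandis}, the same computer-assisted argument that establishes Theorem~\ref{thm:solns_branch1}, now for the three parameter values $\lambda\in\{0.1356,0.1446,0.2146\}$ and their associated numerical approximations $\bx$ (with $L=2\pi$, $m_1=m_2$, and $\nu=1.01$ throughout). For each $\bx$, I would first check that the chosen truncation $\bfm=(m_1,m_2)$ satisfies condition~\eqref{eq:cond_m}, then assemble the finite Jacobian block $D^{(\bfm)}f(\bx)$ together with a numerically computed inverse $A^{(\bfm)}$, and form the operators $A$ and $A^\dagger$ as in~\eqref{eq:A}. Next I would evaluate the four bounds $Y$, $Z_0$, $Z_1$, $Z_2$ from the closed-form expressions~\eqref{eq:Y},~\eqref{eq:Z0},~\eqref{eq:Z1},~\eqref{eq:Z2}: each of these is a \emph{finite} computation, since $f_\bfk(\bx)=0$ for $\bfk\in\vect{I}_{2\bfm-1}$ (which truncates the sum defining $Y$) and since Lemmas~\ref{lem:technical_lemma1} and~\ref{lem:technical_lemma2} reduce the tail contributions entering $Z_1$ and $Z_2$ to finite sums over the rings $\vect R_2$ and $\vect R_1$. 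All arithmetic is carried out with directed rounding in \textsf{INTLAB}, so the computed $Y$, $Z_0$, $Z_1$, $Z_2$ are rigorous upper bounds.

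With these numbers fixed, I would form the radii polynomial $p(r)=Y+(Z_0+Z_1-1)r+Z_2r^2$ of~\eqref{eq:radii_polynomial} and exhibit an interval $\cI=(r_{\min},r_{\max})$ on which $p(r)<0$; concretely it suffices to verify $Z_0+Z_1<1$ together with $(1-Z_0-Z_1)^2>4YZ_2$, and then take any $r\in\cI$. The operator $A$ is injective: its finite block $A^{(\bfm)}$ is checked to be invertible, while on $\bfIm$ it is diagonal with entries $\mu_\bfk^{-1}$, all nonzero because~\eqref{eq:cond_m} forces $\mu_\bfk>0$ for $\bfk\in\bfIm$. Lemma~\ref{lem:rad_poly} then produces a unique $\tx\in B_r(\bx)$ with $f(\tx)=0$. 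By the construction of $f$ in~\eqref{eq:f_k}, the bi-infinite sequence $c(\tx)=\bar c_{0,0}+{\rm sym}(\tx)$ consists of the Fourier coefficients of a function $u(t,y)$ that is $\frac{2\pi}{L}$-periodic in time, $1$-periodic in space, even in both variables, and solves~\eqref{eq:boussinesq}; since $\nu>1$ these coefficients decay geometrically, so $u$ is real-analytic and in particular a classical solution.

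It then remains to convert the $\ell^1_\nu$ error $\|\tx-\bx\|_\nu\le r$ into the claimed $C^0$- and $L^2$-error bounds. Setting $\delta\bydef{\rm sym}(\tx-\bx)=c(\tx)-c(\bx)$, Lemma~\ref{lem:norm_comparisons} gives $\|\delta\|_\nu^*\le 4\|\tx-\bx\|_\nu\le 4r$. Because $\nu>1$ and $|\psi_\bfk|\equiv 1$, the supremum of $|u-\bar u|$ over the fundamental domain is at most $\sum_{\bfk\in\Z^2}|\delta_\bfk|\le\sum_{\bfk\in\Z^2}|\delta_\bfk|\nu^{|\bfk|}=\|\delta\|_\nu^*\le 4r$; and by Parseval on the fundamental domain, $\|u-\bar u\|_{L^2}=(\sum_{\bfk\in\Z^2}|\delta_\bfk|^2)^{1/2}\le\sum_{\bfk\in\Z^2}|\delta_\bfk|\le\|\delta\|_\nu^*\le 4r$. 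In practice all of this is executed by the script {\tt script\_proof\_theorem\_4.m}, which reports the value of $r$ listed in Figure~\ref{fig:solns_branch4}.

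The hard part is not analytic but computational: one must choose $\bx$ accurate enough and $\bfm$ large enough that $Y$ is tiny, that the defect $Z_0=\|I-AA^\dagger\|$ and the off-diagonal coupling $Z_1=\|A\Gamma\|$ are comfortably below $1/2$, and that $Z_2$ (proportional to $\max\{\|A^{(\bfm)}\|,\max_{\bfk\in\vect R_1}\mu_\bfk^{-1}\}$) is moderate, so that $p$ genuinely dips below zero on an interval. This balance is most delicate when $\lambda$ is small, since then some $\mu_\bfk$ are near zero and the tail estimates — which carry factors $\mu_\bfk^{-1}$ — degrade; the three values $\lambda\in\{0.1356,0.1446,0.2146\}$ together with the accompanying $\bfm$ were selected precisely so that condition~\eqref{eq:cond_m} holds and a negative range for $p(r)$ is obtained within a reasonable computation.
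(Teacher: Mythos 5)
Your proposal is correct and follows essentially the same route as the paper: the paper's proof of this theorem simply invokes the argument of Theorem~\ref{thm:solns_branch1} run via the script {\tt script\_proof\_theorem\_4.m}, i.e.\ rigorously evaluating $Y$, $Z_0$, $Z_1$, $Z_2$ with \textsf{INTLAB}, exhibiting $r$ with $p(r)<0$, and applying Lemma~\ref{lem:rad_poly}, with the $C^0$- and $L^2$-error bounds obtained exactly as in your last step (cf.\ Section~4.1 of the paper).
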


\begin{proof}
The proof is similar as the proof of Theorem~\ref{thm:solns_branch1}, and is done by running the
MATLAB script {\tt script\_proof\_theorem\_4.m}.
\end{proof}

All computer-assisted proofs of the above theorems were made on an iMac with a 3.4GHz processor and 16GB of memory.
The statements involving the $C^0$- and $L^2$-error bounds in the theorems are justifies in the next final short section.

\begin{figure}[!htbp]
  \centering
  \begin{subfigure}[t]{0.5\textwidth}
      \centering
      \includegraphics[width=0.95\linewidth]{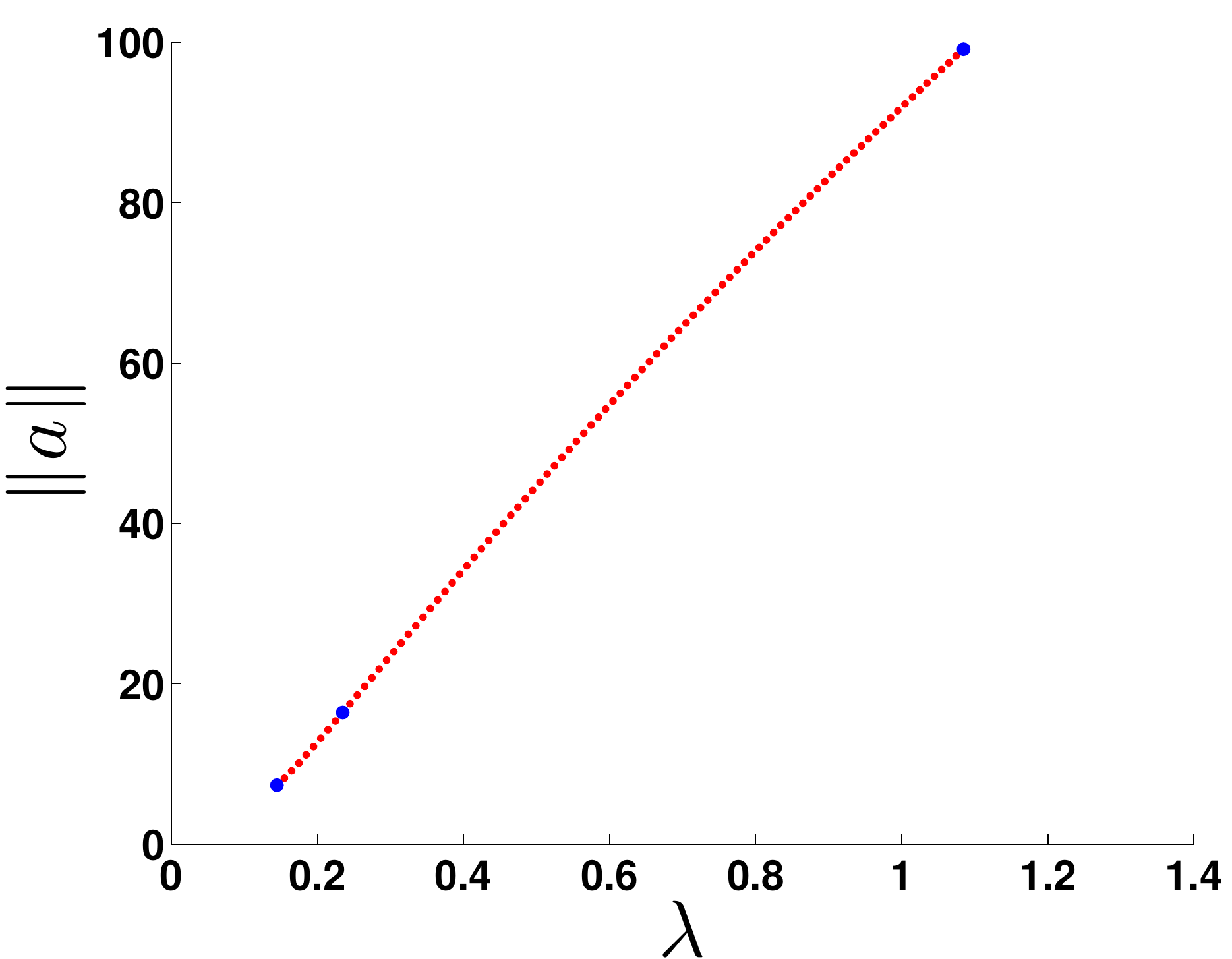}
      \caption{Branch of solutions.}
  \end{subfigure}%
  \begin{subfigure}[t]{0.5\textwidth}
      \centering
      \includegraphics[width=0.95\linewidth]{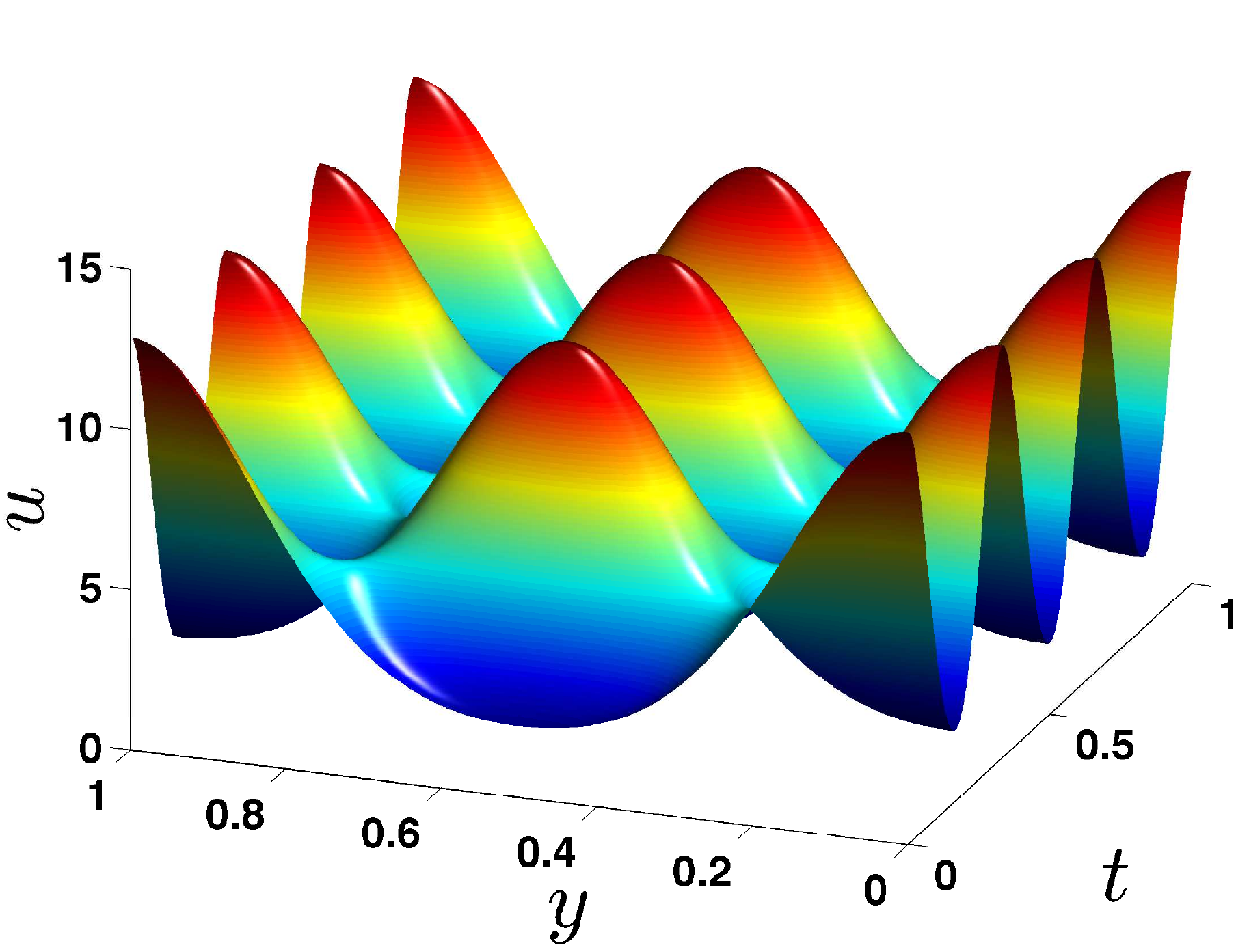}
      \caption{$\lambda = 0.1446$}
  \end{subfigure}
  \begin{subfigure}[t]{0.5\textwidth}
      \centering
      \includegraphics[width=0.95\linewidth]{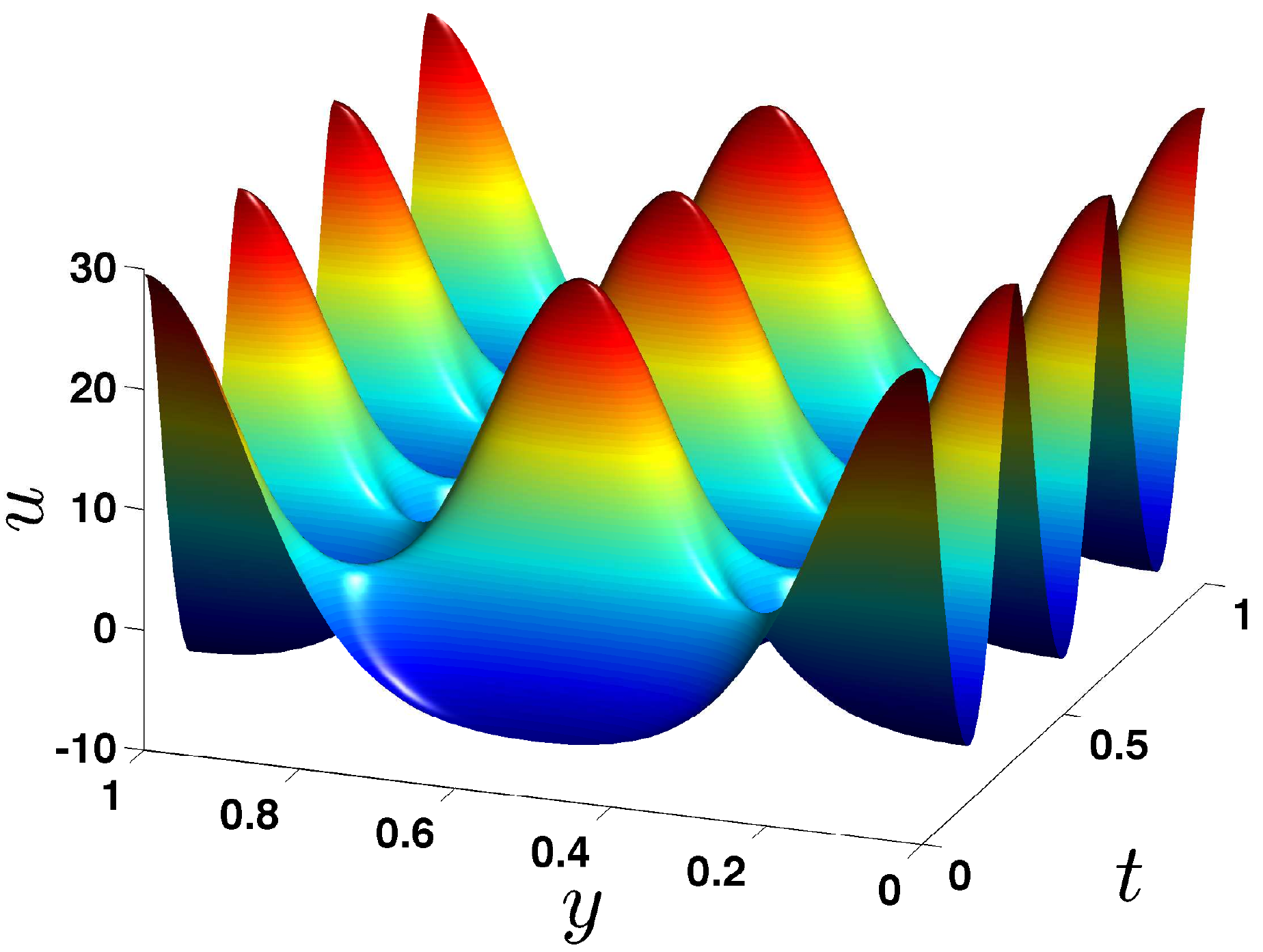}
      \caption{$\lambda = 0.2346$}
  \end{subfigure}%
  \begin{subfigure}[t]{0.5\textwidth}
      \centering
      \includegraphics[width=0.95\linewidth]{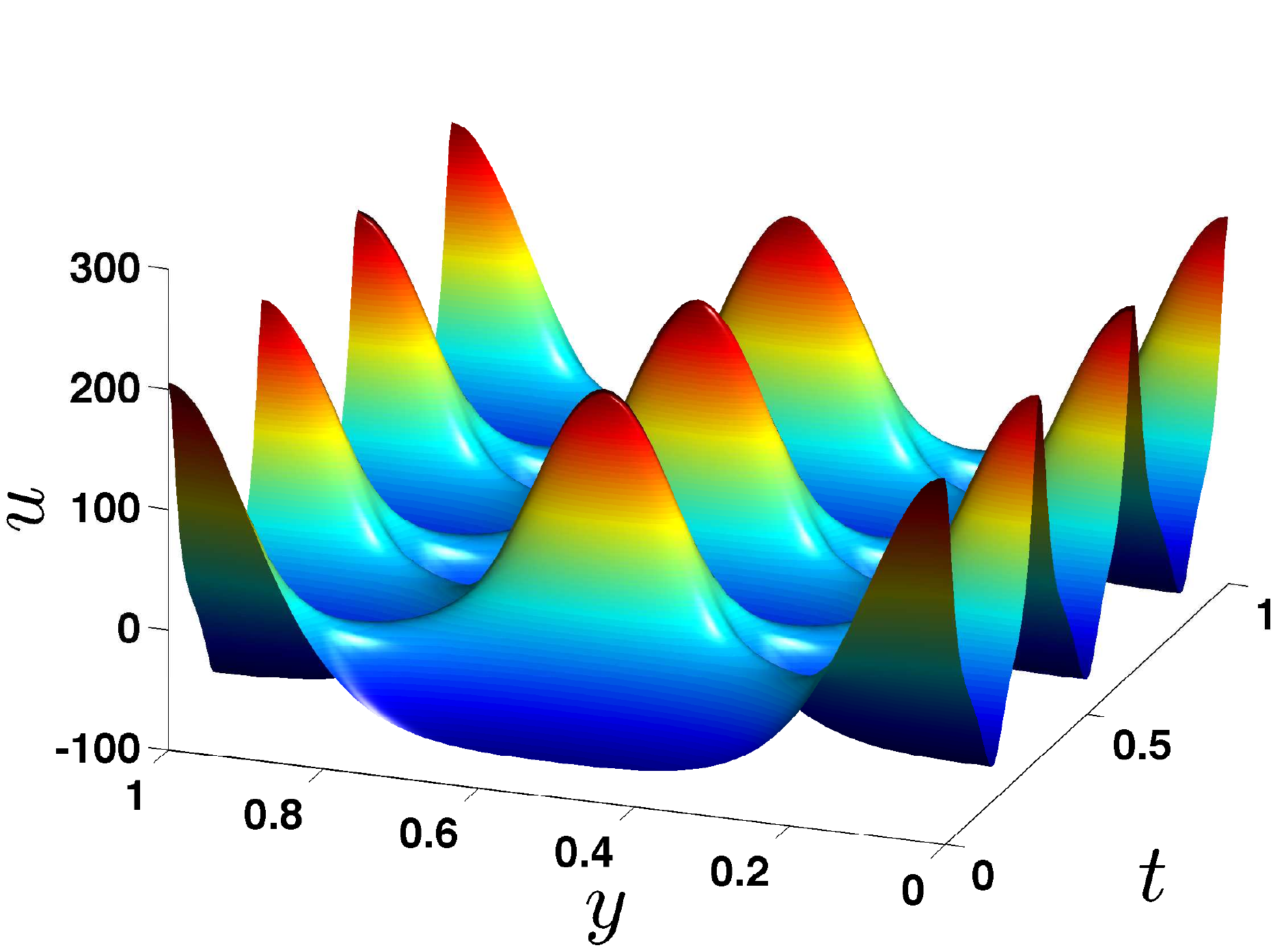}
      \caption{$\lambda = 1.0846$}
  \end{subfigure}%
  \caption{In (a) we plot the branch of numerically computed solutions from which
  we proved, in Theorem~\ref{thm:solns_branch1}, the existence of three periodic
  orbits. These solutions correspond to the larger dots on the curve, and are
  plotted in (b), (c), and (d), where the values of $\lambda$ to which they correspond are
  indicated. The values of the projection dimensions $m_1 = m_2$, the radius $r$, and the
  running time for the proofs are: (b) $m_1 = m_2 = 35$, $r = 1.07191 \times 10^{-11}$,
  running time $50.06$ seconds; (c) $m_1 = m_2 = 61$, $r = 1.45275 \times 10^{-11}$,
  running time $224.42$ seconds; (d) $m_1 = m_2 = 61$, $r = 1.09053 \times 10^{-3}$,
  running time $223.79$ seconds.}
  \label{fig:solns_branch1}
\end{figure}

\begin{figure}[!htbp]
  \centering
  \begin{subfigure}[t]{0.5\textwidth}
      \centering
      \includegraphics[width=0.95\linewidth]{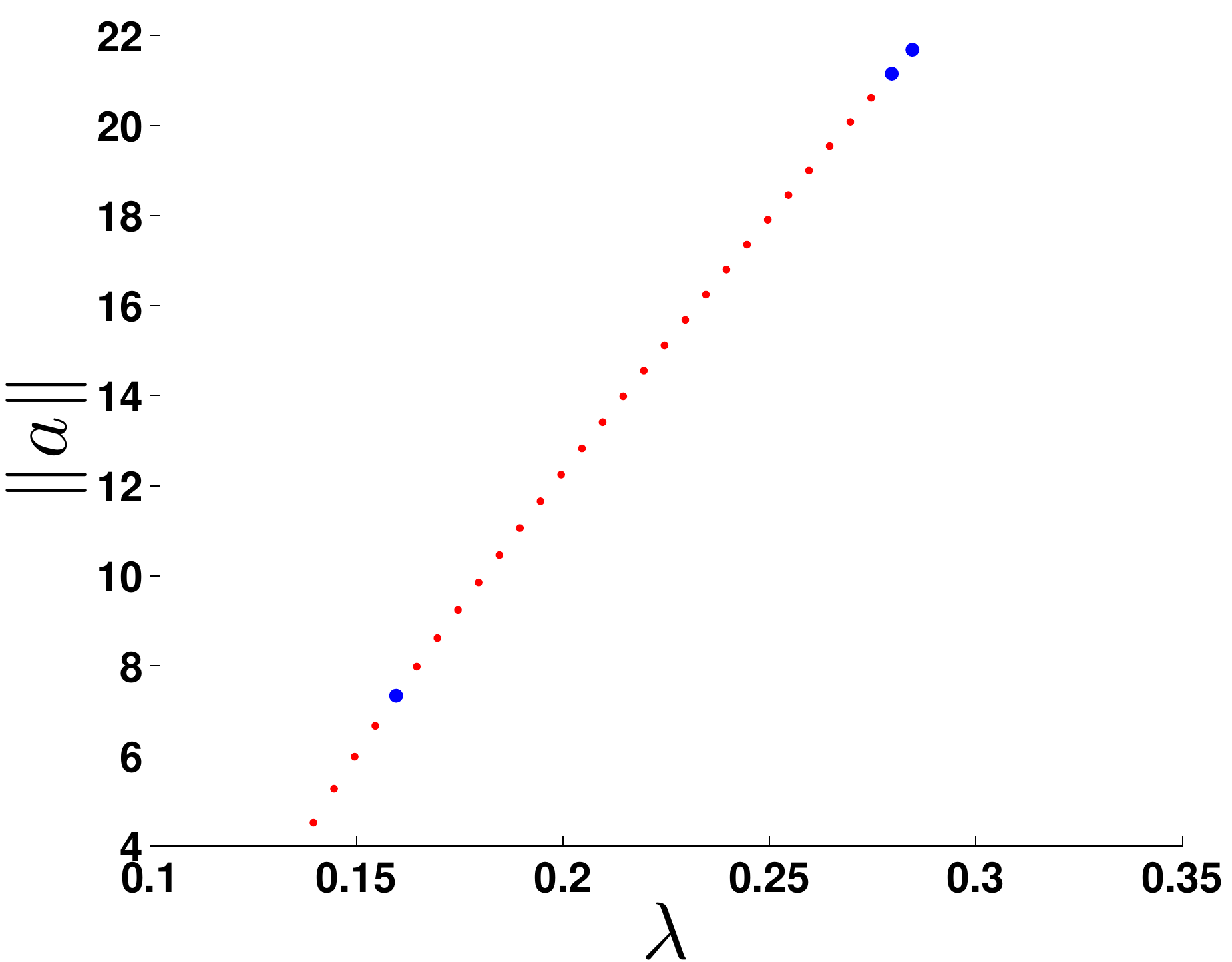}
      \caption{Branch of solutions.}
  \end{subfigure}%
  \begin{subfigure}[t]{0.5\textwidth}
      \centering
      \includegraphics[width=0.95\linewidth]{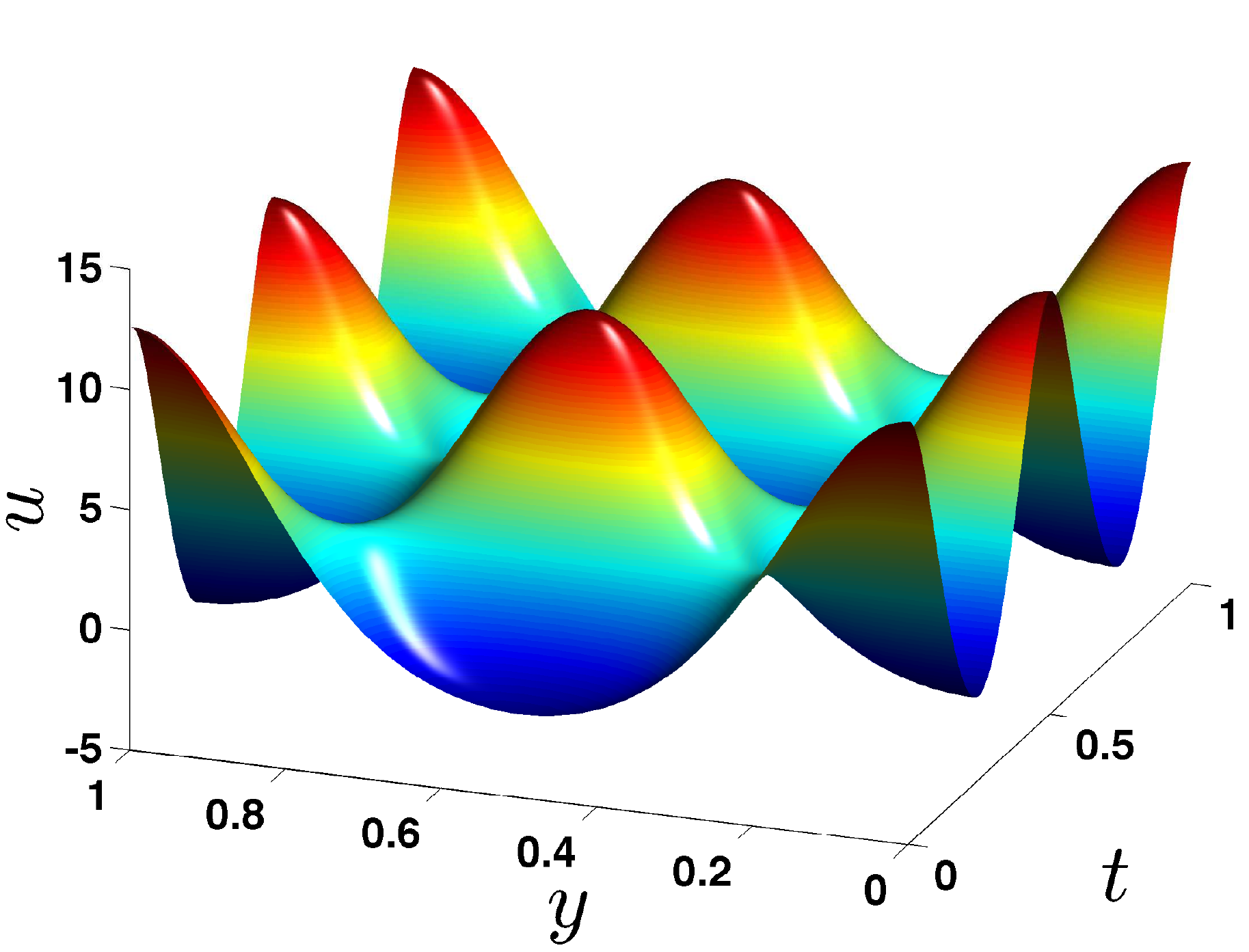}
      \caption{$\lambda = 0.1596$}
  \end{subfigure}
  \begin{subfigure}[t]{0.5\textwidth}
      \centering
      \includegraphics[width=0.95\linewidth]{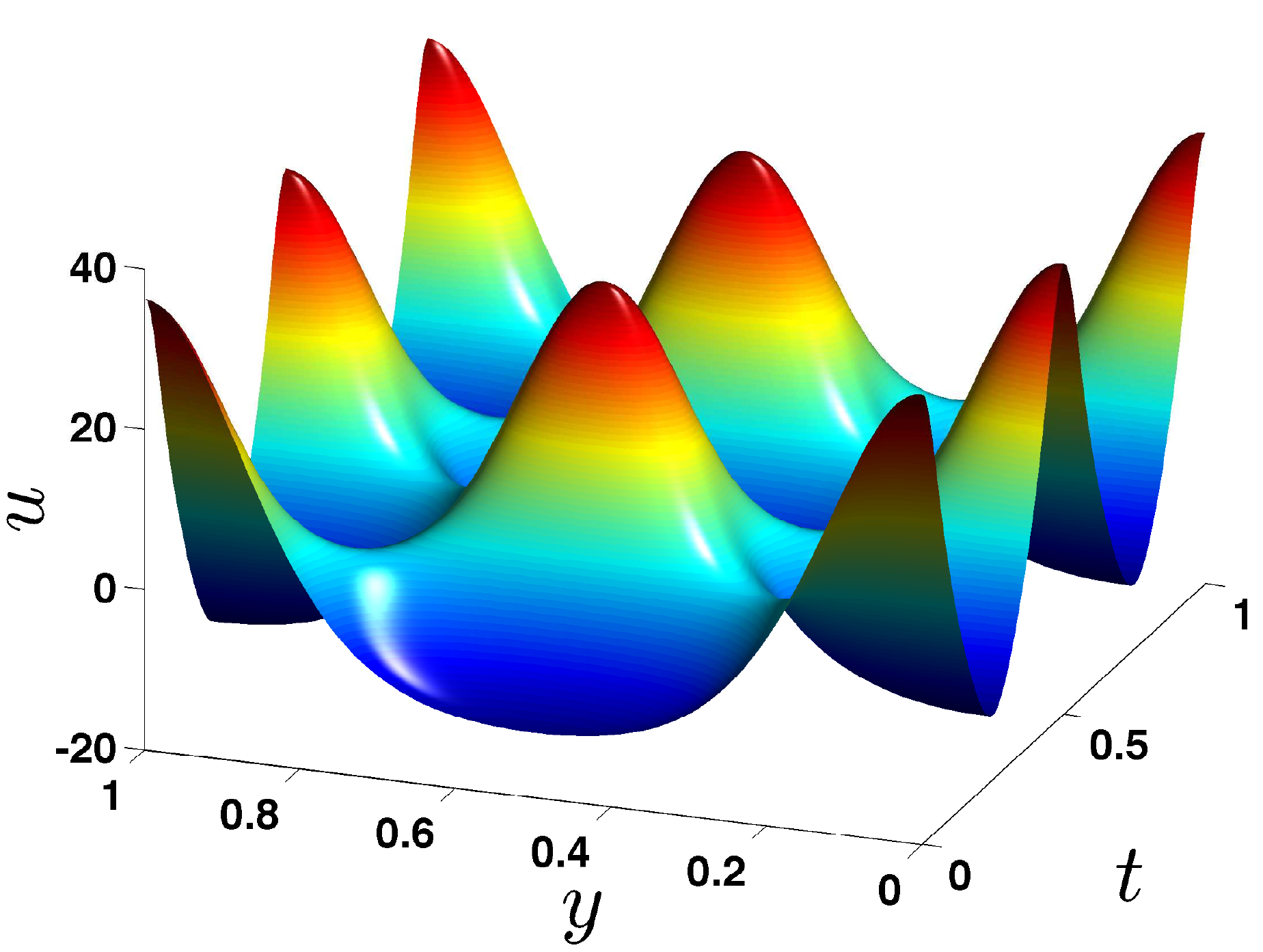}
      \caption{$\lambda = 0.2796$}
  \end{subfigure}%
  \begin{subfigure}[t]{0.5\textwidth}
      \centering
      \includegraphics[width=0.95\linewidth]{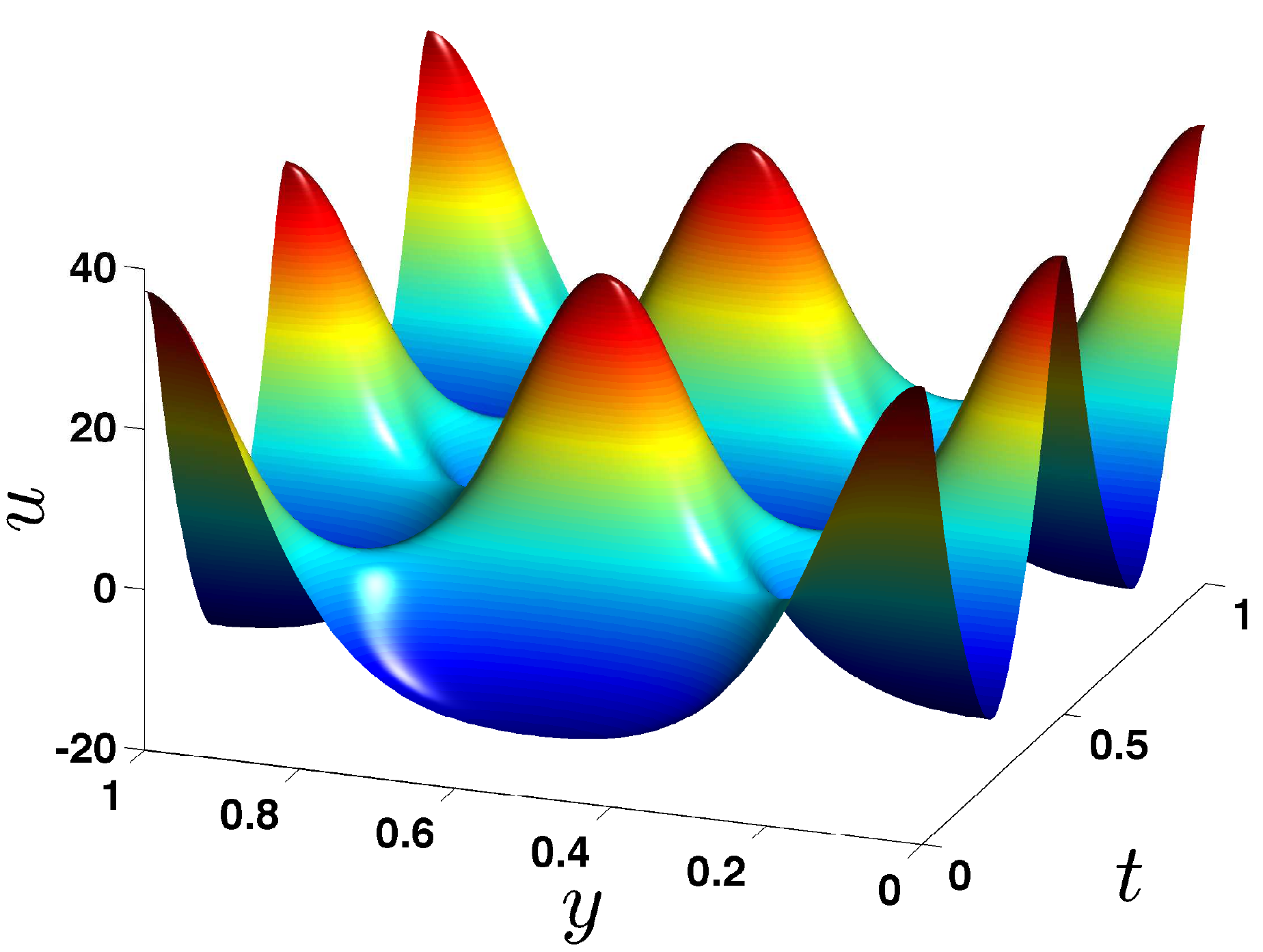}
      \caption{$\lambda = 0.2846$}
  \end{subfigure}%
  \caption{In (a) we plot the branch of numerically computed solutions from which
  we proved, in Theorem~\ref{thm:solns_branch2}, the existence of three periodic
  orbits. These solutions correspond to the larger dots on the curve, and are
  plotted in (b), (c), and (d), where the values of $\lambda$ to which they correspond are
  indicated. The values of the projection dimensions $m_1 = m_2$, the radius $r$, and the
  running time for the proofs are: (b) $m_1 = m_2 = 32$, $r = 2.68062 \times 10^{-12}$,
  running time $40.90$ seconds; (c) $m_1 = m_2 = 61$, $r = 2.13383 \times 10^{-11}$,
  running time $226.94$ seconds; (d) $m_1 = m_2 = 62$, $r = 2.27999 \times 10^{-11}$,
  running time $236.68$ seconds.}
  \label{fig:solns_branch2}
\end{figure}

\begin{figure}[!htbp]
  \centering
  \begin{subfigure}[t]{0.5\textwidth}
      \centering
      \includegraphics[width=0.95\linewidth]{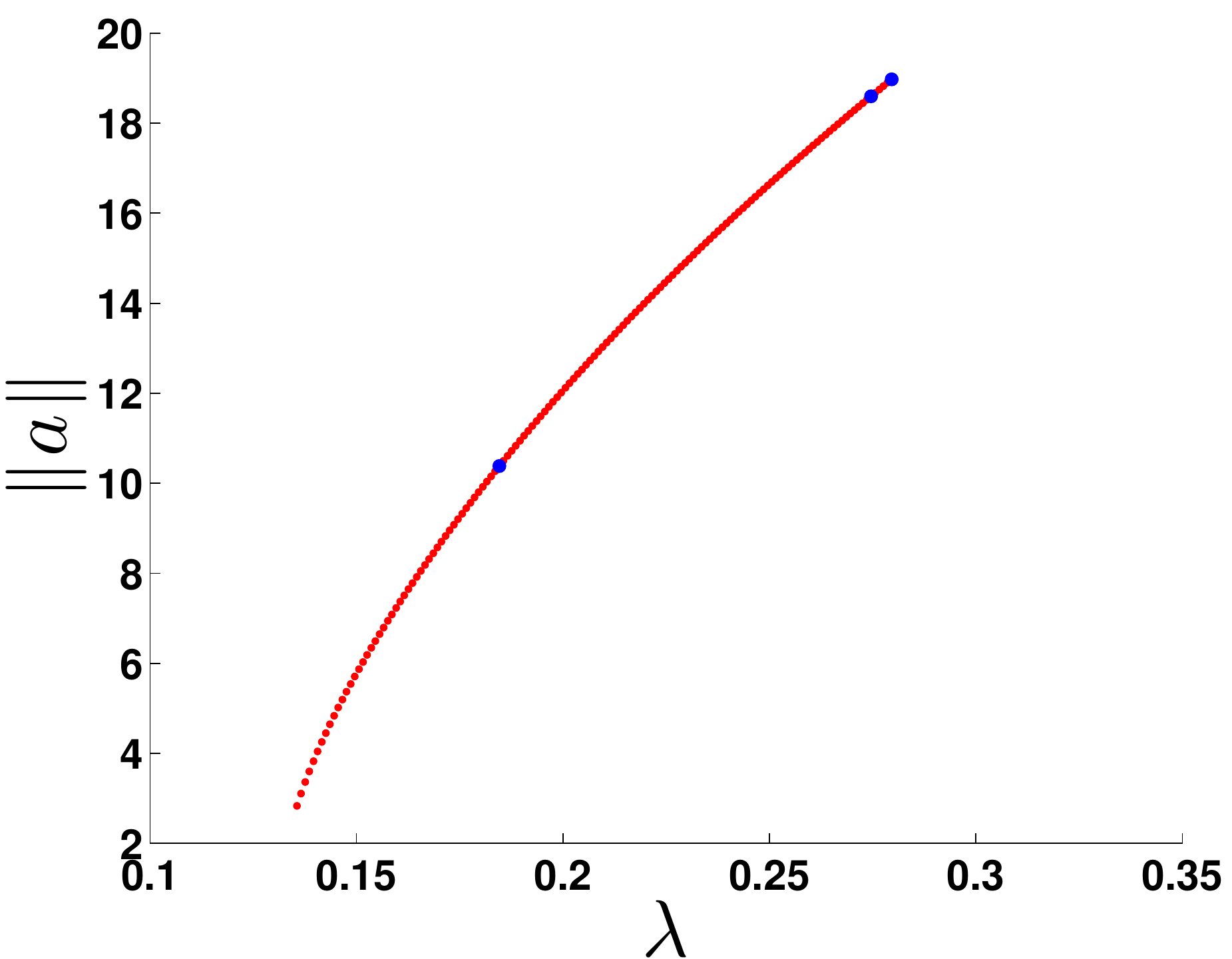}
      \caption{Branch of solutions.}
  \end{subfigure}%
  \begin{subfigure}[t]{0.5\textwidth}
      \centering
      \includegraphics[width=0.95\linewidth]{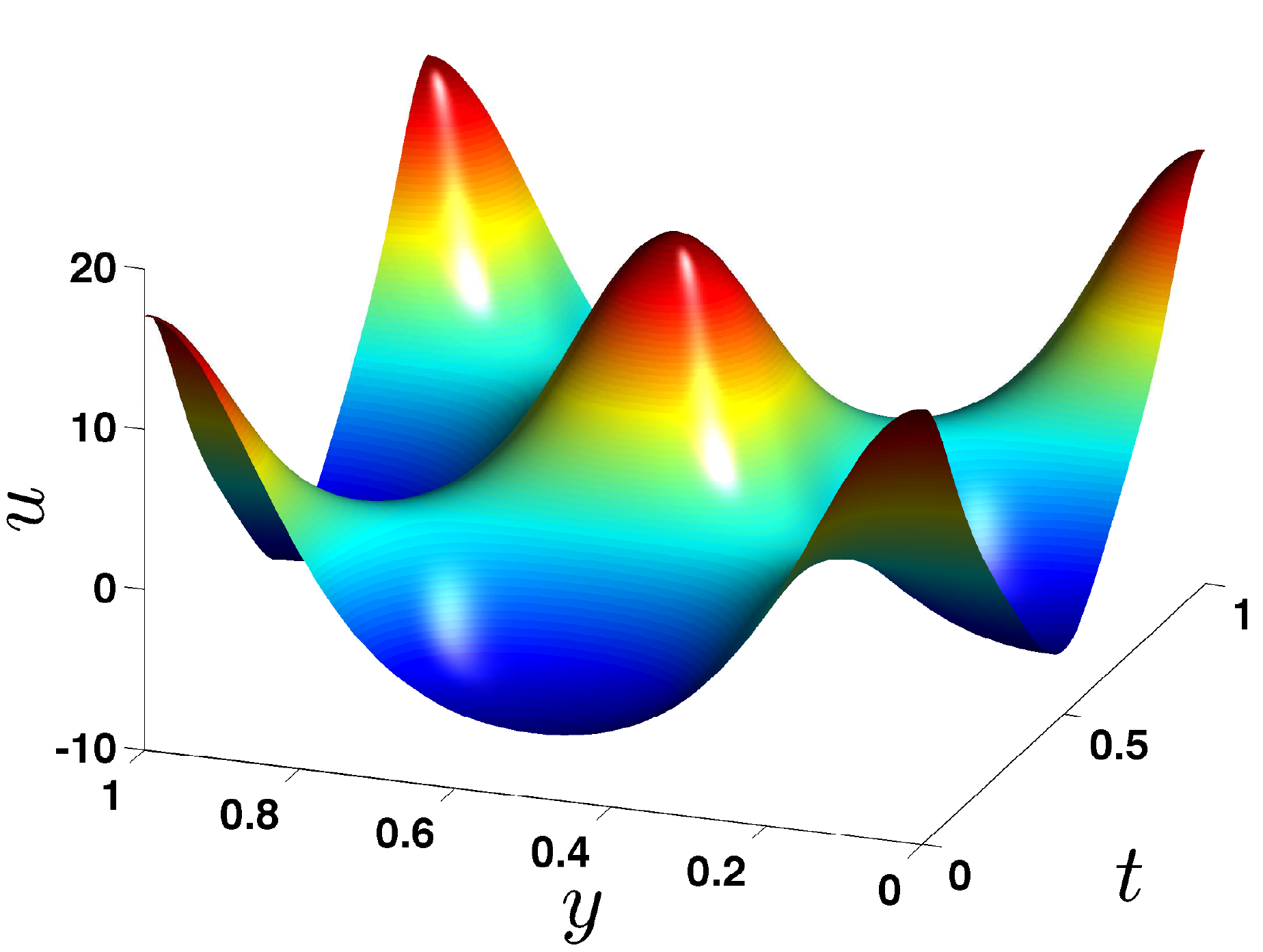}
      \caption{$\lambda = 0.1846$}
  \end{subfigure}
  \begin{subfigure}[t]{0.5\textwidth}
      \centering
      \includegraphics[width=0.95\linewidth]{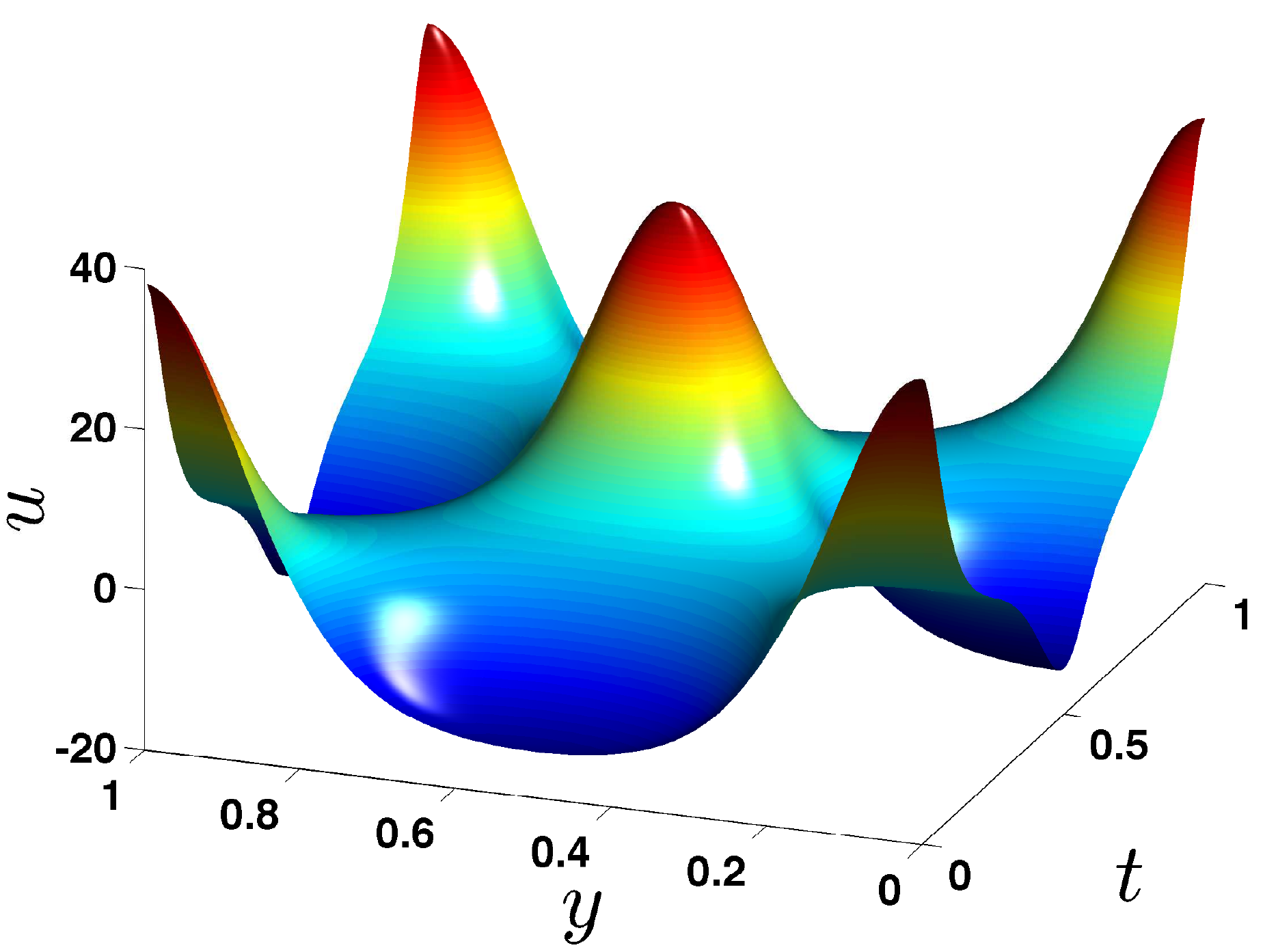}
      \caption{$\lambda = 0.2746$}
  \end{subfigure}%
  \begin{subfigure}[t]{0.5\textwidth}
      \centering
      \includegraphics[width=0.95\linewidth]{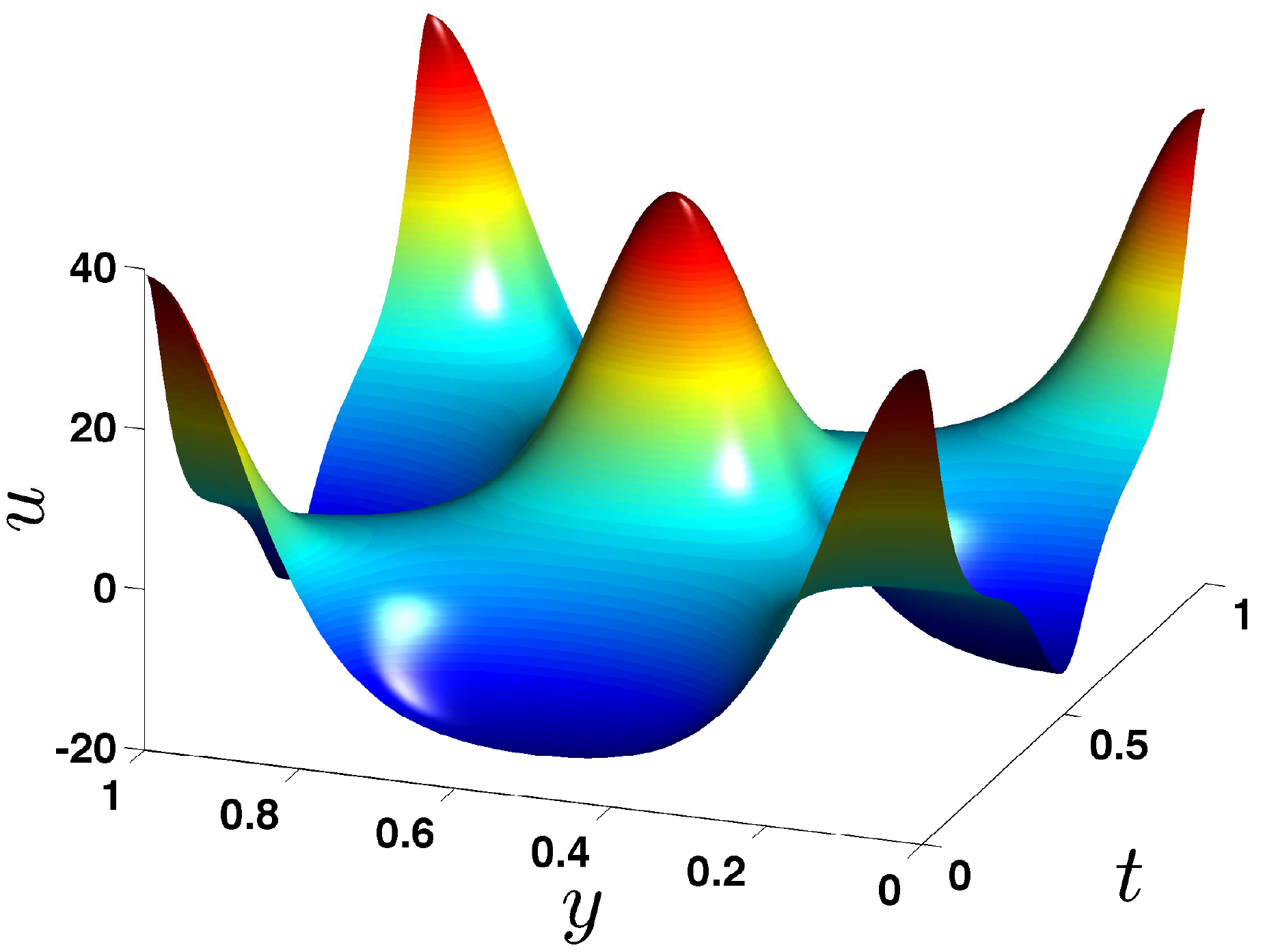}
      \caption{$\lambda = 0.2796$}
  \end{subfigure}%
  \caption{In (a) we plot the branch of numerically computed solutions from which
  we proved, in Theorem~\ref{thm:solns_branch3}, the existence of three periodic
  orbits. These solutions correspond to the larger dots on the curve, and are
  plotted in (b), (c), and (d), where the values of $\lambda$ to which they correspond are
  indicated. The values of the projection dimensions $m_1 = m_2$, the radius $r$, and the
  running time for the proofs are: (b) $m_1 = m_2 = 32$, $r = 3.70605 \times 10^{-12}$,
  running time $40.63$ seconds; (c) $m_1 = m_2 = 61$, $r = 1.56690 \times 10^{-11}$,
  running time $225.13$ seconds; (d) $m_1 = m_2 = 62$, $r = 1.67252 \times 10^{-11}$,
  running time $237.36$ seconds.}
  \label{fig:solns_branch3}
\end{figure}

\begin{figure}[!htbp]
  \centering
  \begin{subfigure}[t]{0.5\textwidth}
      \centering
      \includegraphics[width=0.95\linewidth]{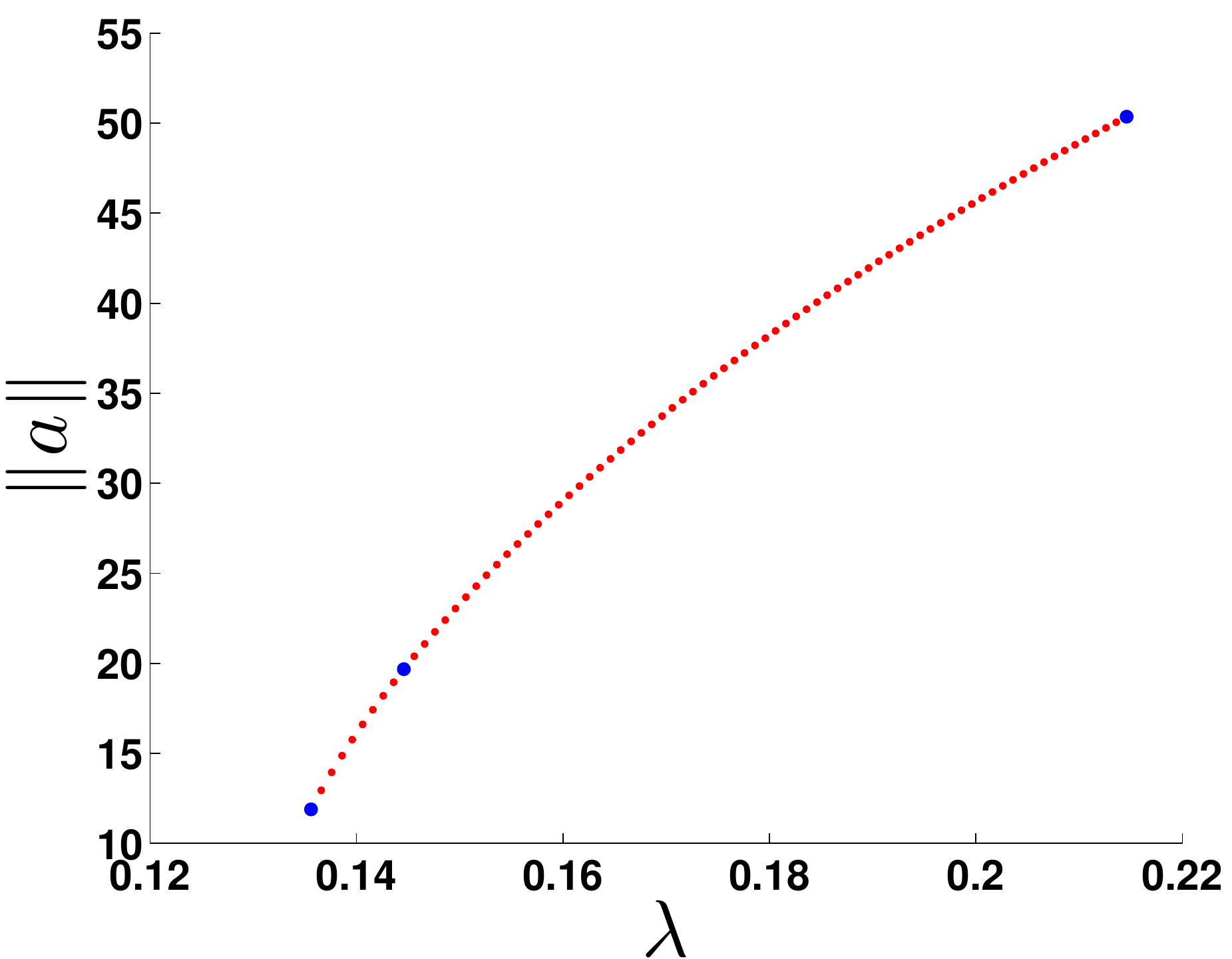}
      \caption{Branch of solutions.}
  \end{subfigure}%
  \begin{subfigure}[t]{0.5\textwidth}
      \centering
      \includegraphics[width=0.95\linewidth]{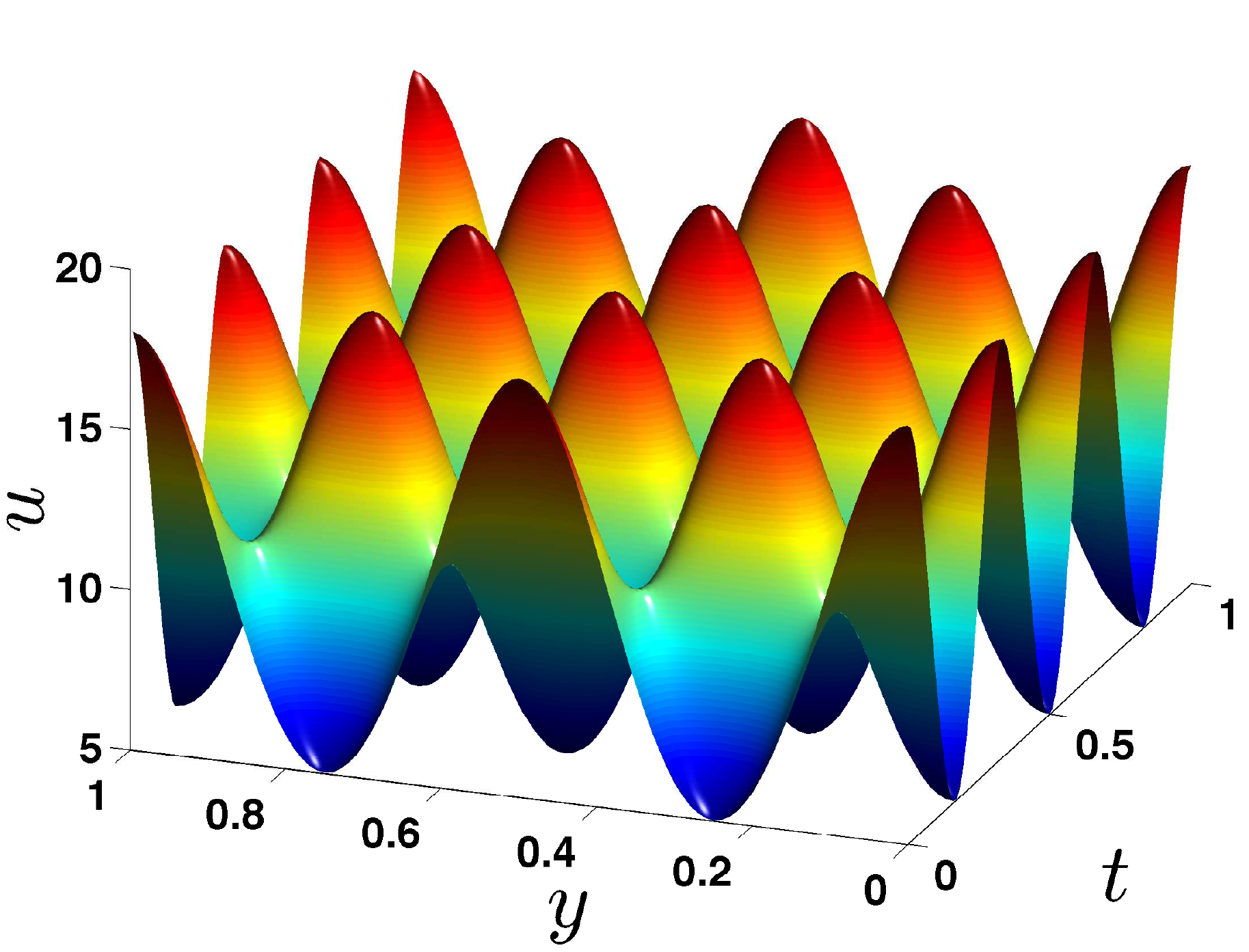}
      \caption{$\lambda = 0.1356$}
  \end{subfigure}
  \begin{subfigure}[t]{0.5\textwidth}
      \centering
      \includegraphics[width=0.95\linewidth]{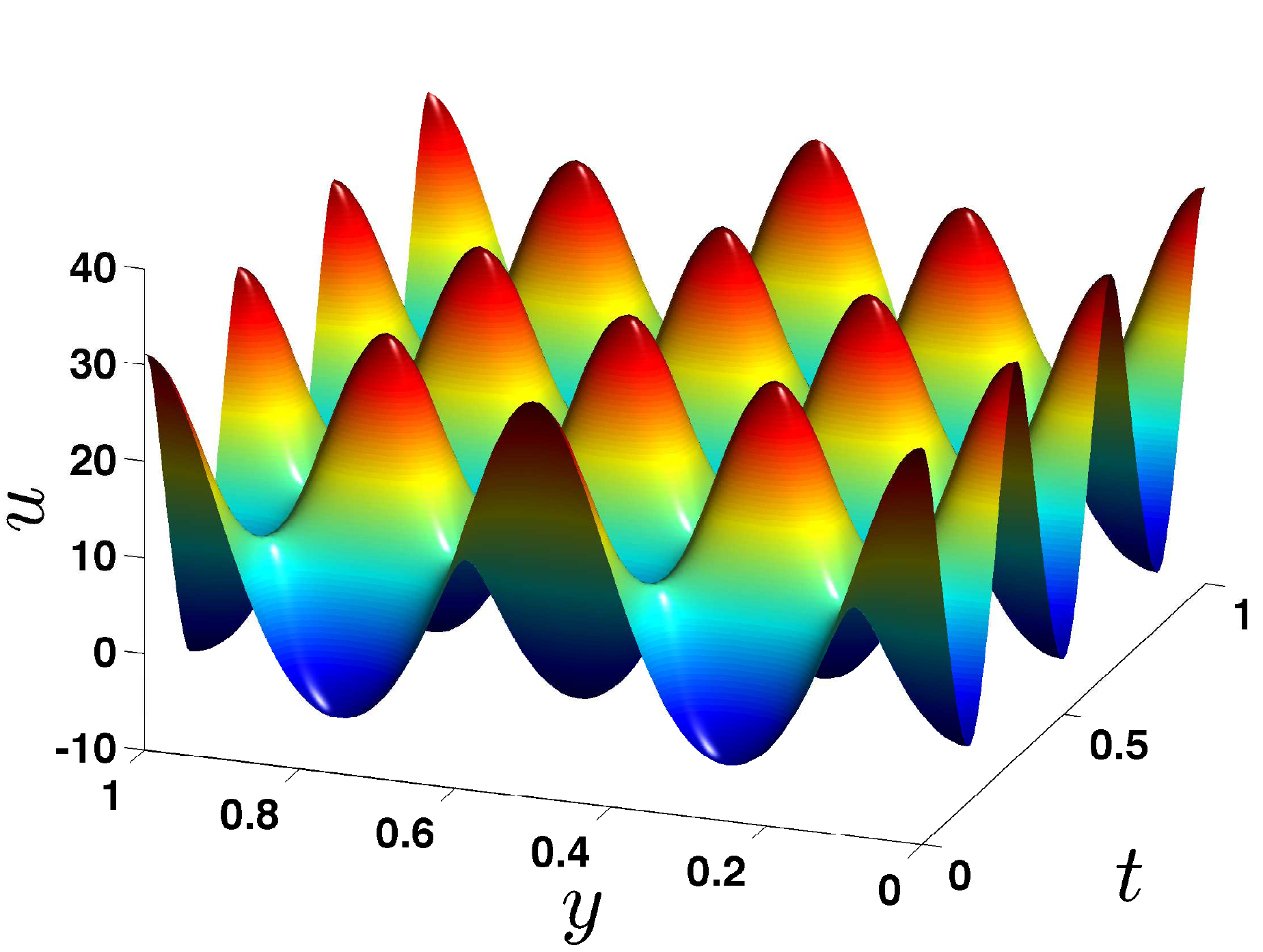}
      \caption{$\lambda = 0.1446$}
  \end{subfigure}%
  \begin{subfigure}[t]{0.5\textwidth}
      \centering
      \includegraphics[width=0.95\linewidth]{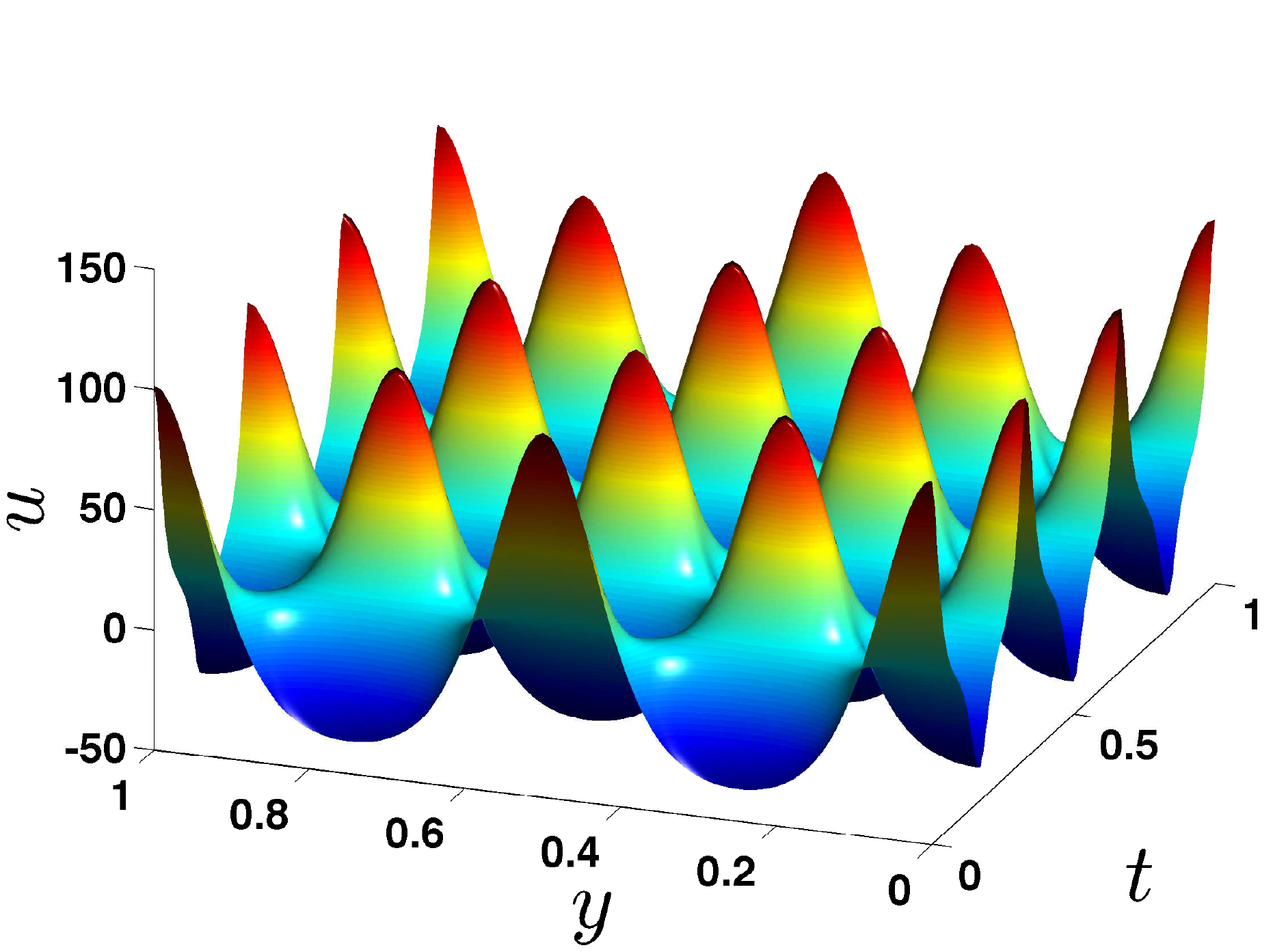}
      \caption{$\lambda = 0.2146$}
  \end{subfigure}%
  \caption{In (a) we plot the branch of numerically computed solutions from which
  we proved, in Theorem~\ref{thm:solns_branch4}, the existence of three periodic
  orbits. These solutions correspond to the larger dots on the curve, and are
  plotted in (b), (c), and (d), where the values of $\lambda$ to which they correspond are
  indicated. The values of the projection dimensions $m_1 = m_2$, the radius $r$, and the
  running time for the proofs are: (b) $m_1 = m_2 = 30$, $r = 2.16093 \times 10^{-10}$,
  running time $35.03$ seconds; (c) $m_1 = m_2 = 61$, $r = 4.37571 \times 10^{-11}$,
  running time $224.99$ seconds; (d) $m_1 = m_2 = 69$, $r = 3.03211 \times 10^{-4}$,
  running time $487.79$ seconds.}
  \label{fig:solns_branch4}
\end{figure}

\subsection{Computing the \boldmath$C^0$\unboldmath-error and \boldmath$L^2$\unboldmath-error bounds}

The error bound in the weighted $\ell^1$ Banach space of space-time Fourier coefficients, as provided by Lemma~\ref{lem:rad_poly}, may not be the most indicative 
quantification of how close the solution $\tx$ is actually from the numerical approximation $\bx$, i.e. the predictors. Here we present how more classical $C^0$ and $L^2$ errors can be obtained. 
Assume that $\tx$ lies in the interior of $B_r(\bx)$, that is $\| \tx - \bx \| <r$. Denote 
\[
\eps(t,y) \bydef \sum_{\bfk \in \mathbb{Z}^2} \tx_\bfk e^{i L k_1t}e^{i k_2 y}
-\sum_{\bfk \in \mathbb{Z}^2} \bx_\bfk e^{i L k_1t}e^{i k_2 y}
= \sum_{\bfk \in \mathbb{Z}^2} (\tx_\bfk - \bx_\bfk  ) e^{i L k_1t}e^{i k_2 y}.
\]
To compute the $C^0$-error, we use Lemma~\ref{lem:norm_comparisons} to get that
\begin{align*}
\sup_{t \in \R} \| \eps(t,\cdot) \|_{C^0} & =  \sup_{t \in \R} \sup_{y \in [0,1]}
\left| \sum_{\bfk \in \mathbb{Z}^2} (\tx_\bfk - \bx_\bfk  ) e^{i L k_1t}e^{i 2 \pi k_2 y} \right|
\le \sum_{\bfk \in \mathbb{Z}^2}  \left| \tx_\bfk - \bx_\bfk   \right| \\
& \le \sum_{\bfk \in \mathbb{Z}^2}  \left| \tx_\bfk - \bx_\bfk   \right| \nu^{|k|}
= \| {\rm sym}(\tx - \bx) \|_\nu^* \le 4 \| \tx - \bx \|_\nu = 4 r.
\end{align*}
For the $L^2$-error, we get
\begin{align*}
\sup_{t \in \R} \| \eps(t,\cdot) \|_{L^2} 
& = \sup_{t \in \R} \sqrt{
\sum_{k_2 \in \mathbb{Z}} \left(
\sum_{k_1 \in \mathbb{Z}} (\bx_\bfk-\tx_\bfk) e^{i L k_1t} 
\right)^2 } \\
& \le \sum_{k_2 \in \mathbb{Z}} 
\sum_{k_1 \in \mathbb{Z}} |\bx_\bfk-\tx_\bfk|
\le 4 \| \tx - \bx \|_\nu = 4 r.
\end{align*}

\subsubsection*{Acknowledgements}

Marcio Gameiro was partially supported by FAPESP grants 2013/07460-7 and 2013/50382-7,
and by CNPq grant 305860/2013-5, Brazil. Jean-Philippe Lessard was partially supported by an NSERC Discovery Grant and by a FAPESP-CALDO grant.

\bibliographystyle{unsrt}
%\bibliography{papers}

%\begin{thebibliography}{1}
%
%\bibitem{MR795808}
%Walter Craig.
%\newblock An existence theory for water waves and the {B}oussinesq and
%  {K}orteweg-de {V}ries scaling limits.
%\newblock {\em Comm. Partial Differential Equations}, 10(8):787--1003, 1985.
%
%\bibitem{MR668408}
%P.~Deift, C.~Tomei, and E.~Trubowitz.
%\newblock Inverse scattering and the {B}oussinesq equation.
%\newblock {\em Comm. Pure Appl. Math.}, 35(5):567--628, 1982.
%
%\bibitem{MR2538946}
%Rafael de~la Llave.
%\newblock A smooth center manifold theorem which applies to some ill-posed
%  partial differential equations with unbounded nonlinearities.
%\newblock {\em J. Dynam. Differential Equations}, 21(3):371--415, 2009.
%
%\bibitem{MR2718657}
%Marcio Gameiro and Jean-Philippe Lessard.
%\newblock Analytic estimates and rigorous continuation for equilibria of
%  higher-dimensional {PDE}s.
%\newblock {\em J. Differential Equations}, 249(9):2237--2268, 2010.
%
%\bibitem{solitons}
%Manoranjan, V. S. and Ortega, T. and Sanz?Serna, J. M.
%\newblock Soliton and antisoliton interactions in the ``good'' Boussinesq equation 
%\newblock{\em 
%Journal of Mathematical Physics}
%29(9): 1964-1968, 1988
%
%
%
%\end{thebibliography}

\end{document}